\numberwithin{equation}{section}
\newcommand\HH{\mathbb{H}}
\newcommand\blambda{{\boldsymbol\lambda}}
\newcommand\bmu{{\boldsymbol\mu
}}
\newcommand\bzeta{{\boldsymbol\zeta
}}
\newcommand\bsigma{{\boldsymbol\sigma
}}
\newtheorem{theorem}{Theorem}
\newtheorem{lemma}[theorem]{Lemma}
\newtheorem{proposition}[theorem]{Proposition}
\theoremstyle{remark}
\newtheorem{remark}[theorem]{Remark}
\newcommand{\abstractin}[1]{%
  \otherlanguage{#1}%
  \item[\hskip\labelsep\scshape\abstractname.]%
}
\begin{document}

\title{\textbf{Resolvent and Spectral Measure on Non-Trapping Asymptotically Hyperbolic Manifolds III: Global-in-Time Strichartz Estimates without Loss}\\ \textbf{(R\'{e}solvante et mesure spectrale sur des vari\'{e}t\'{e}s non-captives \`{a} asymptotes hyperboliques III: Estimations de Strichartz sans perte et globales en temps)}}


\keywords{Asymptotically hyperbolic manifolds, spectral measure, dispersive estimates, Strichartz estimates.}

\author{Xi Chen}

\begin{abstract}

In the present paper, we investigate global-in-time Strichartz estimates without loss
on non-trapping asymptotically hyperbolic manifolds. Due to the hyperbolic nature of such
manifolds, the set of admissible pairs for Strichartz estimates is much larger than usual. These results generalize the works on hyperbolic space due to Anker-Pierfelice and Ionescu-Staffilani. However, our approach is to employ the spectral measure estimates, obtained in the author’s joint work with Hassell, to establish the dispersive estimates for truncated / microlocalized Schr\"{o}dinger propagators as well as the corresponding energy estimates. Compared with hyperbolic space, the crucial point here is to cope with the conjugate points on the manifold. Additionally, these Strichartz estimates are applied to the $L^2$ well-posedness and $L^2$ scattering for nonlinear Schr\"{o}dinger equations with power-like nonlinearity and small Cauchy data.
\vspace{0.5cm}
\abstractin{french}

Dans cet article, nous examinons les estimations de Strichartz sans perte et globales en temps, d\'{e}finies sur des vari\'{e}t\'{e}s non-captives \`{a} asymptotiquement hyperboliques. De par la nature hyperbolique de ces vari\'{e}t\'{e}s, l’ensemble des paires admissibles pour les estimations de Strichartz est beaucoup plus grand que d’ordinaire. Ces r\'{e}sultats g\'{e}n\'{e}ralisent les travaux men\'{e}s par Anker-Pierfelice et Ionescu-Staffilani sur les espaces hyperboliques. Toutefois, notre approche utilise ici les estimations de mesures spectrales obtenues par l'autheur en collaboration avec Hassell afin d’\'{e}tablir les estimations de dispersion pour des propagateurs de Schr\"{o}dinger tronqu\'{e}s ou micro-localis\'{e}s et que les estimations des \'{e}nergies correspondantes. \`{A} la diff\'{e}rence des espaces hyperboliques, l’\'{e}l\'{e}ment crucial est ici de g\'{e}rer les points conjugu\'{e}s de la vari\'{e}t\'{e}. Enfin, ces estimations de Strichartz sont appliqu\'{e}es au caractère bien posé dans $L^2$ et à la diffusion $L^2$ pour les \'{e}quations de Schr\"{o}dinger avec des non-linearités de type puissance et des données initiales petites.

\end{abstract}

\maketitle

\section{Introduction}

This paper, following the author's joint works \cite{Chen-Hassell1} and \cite{Chen-Hassell2} with Andrew Hassell, is the last in a series of papers concerning the analysis of the resolvent family and spectral measure for the Laplacian on non-trapping asymptotically hyperbolic manifolds. The present paper is devoted to the application of spectral measure to Schr\"{o}dinger equations.

We investigate the Cauchy problem of the nonlinear Schr\"{o}dinger equation \begin{equation} \label{eqn:NLS} \left\{ \begin{array}{l} i \frac{\partial}{\partial t} u(t, z)  +  \Delta u(t, z) = F(u(t, z))  \\  u(0, z) = f(z) \end{array} \right. ,\end{equation} on an $n + 1$-dimensional asymptotically hyperbolic manifold $X$ (See Section \ref{sec : spectral measure} for the definition). Here the nonlinear term is power-like, i.e. $F$ satisfies $$|F(u)| \leq C |u|^\gamma \quad \mbox{and} \quad |F(u) - F(v)| \leq C (|u|^{\gamma - 1} + |v|^{\gamma - 1}) |u - v|,$$ for $1 < \gamma \leq 1 + 4/(n + 1)$. This sort of nonlinear dispersive equation cuts an important figure in mathematics and physics. A fundamental problem is the well-posedness of the solution. We say the equation \eqref{eqn:NLS} is globally well-posed in $L^2$ if for any subset $B$ of $L^2$ there exists a subspace $A$, continuously embedded into $C(\mathbb{R}_+; L^2(X))$ such that \eqref{eqn:NLS} has a unique solution in $A$ for any initial data $f$ and the map from $B$ to $A$ is continuous. Another interesting question is that how the solution of \eqref{eqn:NLS} behaves as time goes to infinity. One can understand that in terms of $L^2$ scattering, by which we mean the solution of \eqref{eqn:NLS} converges to the solution of the corresponding homogeneous linear equation in $L^2(X)$ sense as time goes $\pm \infty$. More precisely, for any solution $u$ of \eqref{eqn:NLS} there exists scattering data $u_{\pm}$ such that $$\big\|u - u_{\pm}\big\|_{L^2_z} \longrightarrow 0, \quad \mbox{as $t \rightarrow \pm$}.$$

By the classical theories of well-posedness and scattering for Schr\"{o}dinger equations (See for example \cite{Cazenave book, Tao book}), these problems usually reduce to the so-called 'Strichartz estimates' for the linear Schr\"{o}dinger equation \begin{equation}\label{schrodinger cauchy} \left\{ \begin{array}{l} i \frac{\partial}{\partial t} u  +  \Delta u = F(t, z)  \\  u(0, z) = f(z) \end{array} \right..\end{equation} It has been deeply studied on Euclidean space (See \cite{Kato, Ginibre-Velo-JFA-1995, Keel-Tao}) as well as on manifolds (See \cite{Bourgain-GAFA-1993-1, Bourgain-GAFA-1993-2, Burq-Gerard-Tzvetkov-2004, Burq-Gerard-Tzvetkov-2005}). We shall prove global-in-time Strichartz estimates without loss under the asymptotically hyperbolic regime.
 \begin{theorem}[Strichartz estimates]\label{inhomogeneous strichartz}Suppose $(X, g)$ is an $(n + 1)$-dimensional non-trapping asymptotically hyperbolic manifold with no resonance at the bottom of spectrum. For any admissible pairs $(q, r)$ and $(\tilde{q}^\prime, \tilde{r}^\prime)$ satisfying  \begin{equation}\label{schrodinger admissibility} \frac{2}{q} + \frac{n + 1}{r} \geq \frac{n + 1}{2}, \quad q \geq 2, \quad r > 2, \quad (q, r) \neq (2, \infty),\end{equation} we have the inhomogeneous Strichartz estimates \begin{equation}\label{strichartz inequality}\|u\|_{L^q_t L^r_z (\mathbb{R} \times X)} \leq C \big(\|f\|_{L^2(X)} + \|F\|_{L^{\tilde{q}^\prime}_t L_z^{\tilde{r}^\prime}(\mathbb{R} \times X)}\big),\end{equation} provided $f$and $F$ are both orthogonal to the eigenfunctions of $\Delta$.\end{theorem}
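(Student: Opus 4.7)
My plan is to reduce \eqref{strichartz inequality} to dyadic-in-frequency dispersive and energy estimates for truncated and microlocalized Schr\"odinger propagators, and then invoke the abstract Keel--Tao machinery, combined with a Christ--Kiselev argument for the non-diagonal inhomogeneous case, to recover the enlarged admissible range in \eqref{schrodinger admissibility}.

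The first step is to represent the propagator through the spectral theorem as
$$ e^{it\Delta} f \; = \; \int_0^\infty e^{-it\lambda^2}\, dE_{\sqrt{H}}(\lambda)\, f, $$
where $H$ denotes $-\Delta$ restricted to the continuous spectrum (this is where the hypothesis that $f$ and $F$ are orthogonal to the $L^2$-eigenfunctions enters), and then to apply a dyadic Littlewood--Paley cut-off in $\lambda$ to decompose $U(t) = U_{\lo}(t) + \sum_{k \geq 0} U_{\hi,k}(t)$, with $U_{\hi,k}$ localized to $\lambda \sim 2^k$ and $U_{\lo}$ to $\lambda \lesssim 1$. For each dyadic piece I aim at an $L^1 \to L^\infty$ dispersive bound of the schematic form
$$ \|U_{\hi,k}(t)\|_{L^1 \to L^\infty} \; \lesssim \; \min\bigl( 2^{k(n+1)}|t|^{-(n+1)/2}, \ |t|^{-3/2} \bigr), $$
together with the trivial $L^2 \to L^2$ energy bound. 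The Euclidean-type decay $|t|^{-(n+1)/2}$ governs short times, while the hyperbolic $|t|^{-3/2}$ decay, inherited from the spectral gap at the bottom of the Laplacian on an asymptotically hyperbolic manifold, governs large times and is what enlarges the admissible range beyond the usual sharp line.

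The dispersive estimates are the technical heart. In the high-frequency regime I would insert the Chen--Hassell parametrix for $dE_{\sqrt{H}}(\lambda)$ from \cite{Chen-Hassell1, Chen-Hassell2}---a sum of oscillatory-integral pieces tied to the geodesic flow plus a Hadamard-type long-distance contribution reflecting exponential volume growth---into the above spectral representation, and apply stationary phase in $\lambda$ to extract the kernel of $U_{\hi,k}(t)$. The principal obstacle, as flagged in the abstract, is that the non-trapping hypothesis does \emph{not} exclude conjugate points along geodesics, so the spectral measure kernel is not globally a non-degenerate Fourier integral operator and stationary phase cannot be applied uniformly on the diagonal. The remedy is to microlocalize the propagator via a finite partition of unity on $T^*X$ chosen small enough to isolate conjugate-point-free portions of the flow, to estimate each microlocal piece directly from the parametrix, and then to sum the boundedly overlapping contributions. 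For the low-frequency piece, the assumption of no resonance at the bottom of the spectrum provides a smooth resolvent expansion near $\lambda = 0$; integrating in $\lambda$ after standard contour deformations produces the required $|t|^{-3/2}$ tail.

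With the dispersive and energy bounds in hand for each dyadic piece, Keel--Tao yields the full range of homogeneous Strichartz estimates on both the sharp and the non-sharp sides of \eqref{schrodinger admissibility}; the non-sharp admissible pairs arise by interpolating the long-time $|t|^{-3/2}$ decay with $L^2$ conservation via Hardy--Littlewood--Sobolev, in the spirit of Anker--Pierfelice. Summation over $k$ is handled by the $L^2$-orthogonality of the spectral cut-offs, after which the inhomogeneous estimate with two distinct admissible pairs $(q,r)$ and $(\tilde q', \tilde r')$ follows from a $TT^*$ duality argument combined with the Christ--Kiselev lemma; the latter is applicable precisely because the conditions $(q,r) \neq (2,\infty)$ and $r > 2$ in \eqref{schrodinger admissibility} guarantee the required strict inequality between the time exponents.
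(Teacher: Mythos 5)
Your overall architecture — spectral-measure representation, low/high splitting, microlocalization to tame conjugate points, and then Keel--Tao plus Christ--Kiselev — tracks the paper's logic, but two concrete points need attention, the first of which is a genuine gap.

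The main gap is in how you pass from pointwise dispersive bounds to the $L^{r'}_z \to L^r_z$ bounds needed for the long-time regime. You claim that the long-time $|t|^{-3/2}$ decay in $L^1 \to L^\infty$, interpolated against the $L^2$ energy bound, gives the required decay for all $r>2$. It does not: interpolating $\|U(t)\|_{L^1 \to L^\infty} \lesssim |t|^{-3/2}$ with $\|U(t)\|_{L^2 \to L^2} \lesssim 1$ yields only $\|U(t)\|_{L^{r'} \to L^{r}} \lesssim |t|^{-3(1/2 - 1/r)}$, which for $2 < r \leq 6$ is not integrable on $\{|t|>1\}$ and hence does not close the long-time part of the Strichartz bound. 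The paper must instead establish an \emph{unimproved} $L^{r'} \to L^r$ bound of size $|t|^{-3/2}$ for every $r>2$, and the tool that makes this possible is not interpolation but the Kunze--Stein phenomenon (in the generalized Anker--Pierfelice form \eqref{Kunze-Stein2}), transferred from $\mathbb{H}^{n+1}$ to the front face of $X^2_0$ via local coordinate maps, together with the exponential kernel decay $e^{-nd(z,z')/2}$ in the dispersive estimates. Without invoking Kunze--Stein (or an equivalent convolution inequality exploiting negative curvature), the long-time estimates, and hence the enlarged admissible range \eqref{schrodinger admissibility} that distinguishes the hyperbolic setting, do not follow from your stated ingredients.

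A secondary remark concerns the decomposition and the retarded estimate at the endpoint. You propose a full dyadic-in-frequency decomposition and then sum via ``$L^2$-orthogonality of the spectral cut-offs''; the paper avoids this by using only a single low/high cut-off combined with a \emph{finite} microlocal partition $\sum_k Q_k$, so there is no infinite sum to control in $L^q_t L^r_z$. Your dyadic version could be made to work, but $L^2$-orthogonality alone does not let you add up Strichartz norms without a square-function argument or an $\epsilon$-loss, so that step needs to be fleshed out. More importantly, at the endpoint $q = \tilde q = 2$ Christ--Kiselev degenerates and one must run the Keel--Tao retarded argument directly on the cross terms $U_j(t) U_k^\ast(s)$ with $j \neq k$. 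The paper has to classify the pairs $(Q_j, Q_k)$ into outgoing-related, incoming-related, and overlapping cases (Lemma \ref{classification of microlocalization}) precisely because the off-diagonal microlocalized spectral measure is \emph{not} a Lagrangian distribution with a globally single-signed phase, and hence the dispersive bound only holds one-sided in $t-s$; the remaining sign of $t-s$ is recovered from the already-proved homogeneous estimate by duality. Your proposal treats the retarded estimate as a routine consequence of $TT^\ast$ plus Christ--Kiselev, which does not cover this endpoint subtlety.
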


This result readily applies to $L^2$ well-posedness and $L^2$ scattering for the nonlinear Schr\"{o}dinger equation \eqref{eqn:NLS}. We obtain \begin{theorem}[$L^2$ Well-posedness and $L^2$ Scattering]\label{thm:well-posedness and scattering} Suppose $(X, g)$ is a manifold in Theorem \ref{inhomogeneous strichartz} and there are no eigenvalues of $\Delta$. Given $\gamma \in (1,  1 + 4/(n + 1)]$ and a small Cauchy data $f \in L^2(X)$, the nonlinear Schr\"{o}dinger equation \eqref{eqn:NLS} is globally well-posed in $L^2(X)$, whilst for any solution $u(t, z)$ there exists $u_\pm \in L^2(X)$ such that $$\|u(t) - e^{it\Delta} u_\pm\|_{L^2(X)} \longrightarrow 0, \quad \mbox{as $t \rightarrow \pm \infty$.}$$
\end{theorem}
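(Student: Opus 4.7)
The plan is to set up the standard Strichartz fixed-point scheme for the Duhamel integral equation
$$u(t) = \Phi(u)(t) := e^{it\Delta} f - i \int_0^t e^{i(t-s)\Delta} F(u(s))\, ds$$
in a small ball of a space-time Lebesgue norm, and then to extract scattering states from the Duhamel integral. Since $\Delta$ is assumed to carry no $L^2$ eigenvalues, the orthogonality hypothesis in Theorem~\ref{inhomogeneous strichartz} is vacuous, so the inhomogeneous estimate \eqref{strichartz inequality} applies to all $f \in L^2(X)$ and all admissible nonlinearities $F$.

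The decisive observation is that, because \eqref{schrodinger admissibility} carves out a genuinely two-dimensional admissible region (an inequality rather than an equality), the diagonal pair $(q,r) = (\gamma+1, \gamma+1)$ is itself admissible for every $\gamma \in (1, 1 + 4/(n+1)]$ (indeed $(n+3)/(\gamma+1) \ge (n+1)/2$, with equality exactly at the critical exponent $\gamma = 1 + 4/(n+1)$), and is moreover self-dual in the sense that $(\tilde q', \tilde r') = (q/\gamma, r/\gamma)$ corresponds to the same admissible $(\tilde q, \tilde r) = (q, r)$. Hölder in space-time, combined with the growth and Lipschitz hypotheses on $F$, then yields
$$\|F(u)\|_{L^{\tilde q'}_t L^{\tilde r'}_z} \le C \|u\|_{L^{\gamma+1}_{t,z}}^\gamma, \qquad \|F(u) - F(v)\|_{L^{\tilde q'}_t L^{\tilde r'}_z} \le C \bigl(\|u\|_{L^{\gamma+1}_{t,z}}^{\gamma-1} + \|v\|_{L^{\gamma+1}_{t,z}}^{\gamma-1}\bigr) \|u - v\|_{L^{\gamma+1}_{t,z}}.$$
Substituting these bounds into \eqref{strichartz inequality} applied to $\Phi(u)$ and to $\Phi(u) - \Phi(v)$ gives $\|\Phi(u)\|_{L^{\gamma+1}_{t,z}} \le C_0 \|f\|_{L^2} + C_1 \|u\|_{L^{\gamma+1}_{t,z}}^\gamma$ and the matching Lipschitz estimate. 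For $\|f\|_{L^2}$ smaller than a threshold depending only on $C_0$, $C_1$ and $\gamma$, the map $\Phi$ then strictly contracts the closed ball of radius $2 C_0 \|f\|_{L^2}$ in $L^{\gamma+1}_{t,z}(\mathbb{R} \times X)$, producing a unique global fixed point $u$. Continuity $u \in C(\mathbb{R}; L^2(X))$ follows from the strong continuity of $e^{it\Delta}$ on $L^2$ together with the standard $L^\infty_t L^2_z$ bound on the Duhamel integral obtained by duality.

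For $L^2$ scattering, I would define $u_\pm := f - i \int_0^{\pm\infty} e^{-is\Delta} F(u(s))\, ds$. By duality, the homogeneous bound $\|e^{it\Delta} g\|_{L^{\tilde q}_t L^{\tilde r}_z} \lesssim \|g\|_{L^2}$ (the $F = 0$ case of \eqref{strichartz inequality}) upgrades to $\bigl\|\int e^{-is\Delta} G(s)\, ds\bigr\|_{L^2} \lesssim \|G\|_{L^{\tilde q'}_t L^{\tilde r'}_z}$, which combined with the Hölder bound above shows $u_\pm \in L^2(X)$. Since
$$u(t) - e^{it\Delta} u_\pm = i \int_t^{\pm\infty} e^{i(t-s)\Delta} F(u(s))\, ds,$$
applying $e^{-it\Delta}$ (which is unitary on $L^2$) and invoking the same dual estimate bounds the $L^2$-norm by $\|F(u)\|_{L^{\tilde q'}_t L^{\tilde r'}_z(\{|s| \ge |t|\} \times X)}$, which vanishes as $t \to \pm\infty$ by dominated convergence. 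The main analytic difficulty of the entire result is already absorbed into Theorem~\ref{inhomogeneous strichartz}; once its enlarged admissibility region is in hand, the argument reduces to the classical Kato--Cazenave contraction scheme and presents no genuinely new obstacle. The one non-routine check is that the diagonal pair $(\gamma+1, \gamma+1)$ lies inside the admissible region all the way to the endpoint $\gamma = 1 + 4/(n+1)$, which is precisely the elementary calculation flagged above.
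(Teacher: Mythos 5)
Your proof is correct and follows the same route the paper intends: the paper omits the argument and refers to Anker--Pierfelice for a contraction-mapping proof of well-posedness together with a Cauchy-criterion (dominated convergence) proof of scattering, which is exactly the scheme you carry out. Your identification of the self-dual diagonal pair $(\gamma+1,\gamma+1)$ inside the enlarged admissible region \eqref{schrodinger admissibility} is precisely the observation that lets the Anker--Pierfelice argument transfer verbatim.
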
 

Assuming Theorem \ref{inhomogeneous strichartz}, the proof of Theorem \ref{thm:well-posedness and scattering} is standard. The well-posedness is given by a contraction mapping theorem method with the global-in-time Strichartz estimates, whilst the scattering part is simply given by a quick application of Cauchy criterion. We omit the proof and refer the reader to \cite{Anker-Pierfelice}, because their argument works verbatim in the asymptotically hyperbolic settings.

Return to the Strichartz estimates. By the classical approach formulated by Kato \cite{Kato},  Ginibre and Velo \cite{Ginibre-Velo-JFA-1995}, Keel and Tao \cite{Keel-Tao} etc., it is sufficient to prove energy estimates $$\|e^{i t \Delta} f\|_{L^2} \leq \|f\|_{L^2}$$ and dispersive estimates \begin{equation}\label{eqn:euclidean dispersive}\|e^{i(t - s)\Delta} f\|_{\infty} \leq |t - s|^{- (n + 1)/2} \|f\|_{L^1}\end{equation} for the Schr\"{o}dinger propagator on Euclidean space. However, unlike on Euclidean space, dispersive estimates in above form is too strong to hold on asymptotically hyperbolic manifolds.

First of all, unlike \eqref{eqn:euclidean dispersive}, we don't have dispersive estimates uniformly in time on hyperbolic space $\mathbb{H}^{n + 1}$ with $n > 2$. Actually, Anker and Pierfelice \cite{Anker-Pierfelice} independently Ionescu and Staffilani \cite{Ionescu-Staffilani-Mathann-2009} proved following dispersive estimates \begin{equation}\label{hyperbolic dispersive}\big|\text{Ker}\, e^{it\Delta_{\mathbb{H}^{n + 1}}} \big| \leq C \left\{ \begin{array}{cc} t^{-3/2} \big(1 + d(z, z^\prime)\big) e^{-n d(z, z^\prime) /2 } & \quad \mbox{if $t \geq 1 + d(z, z^\prime)$}\\  t^{- (n + 1)/2} \big(1 + d(z, z^\prime)\big)^{n/2} e^{-n d(z, z^\prime) /2 } & \quad \mbox{if $t \leq 1 + d(z, z^\prime)$}\end{array}\right.\end{equation} on real hyperbolic space $\mathbb{H}^{n + 1}$. Similar results on convex co-compact hyperbolic manifolds were proved by Burq, Guillarmou and Hassell \cite{Burq-Guillarmou-Hassell}. The spectral theorem gives $$e^{it\Delta} = e^{itn^2/4} \int_0^\infty e^{it\blambda^2} dE_{\sqrt{(\Delta - n^2/4)_+}}(\blambda, z, z').$$ So we can explain \eqref{hyperbolic dispersive} via the spectral measure $dE_{\sqrt{(\Delta - n^2/4)_+}}$. On the one hand, the nonuniformity of \eqref{hyperbolic dispersive} in time actually results from the discrepancy of powers in the spectral measure estimates on $\mathbb{H}^{n + 1}$. We shall see in Section \ref{sec : spectral measure}, Section \ref{sec : dispersive1} and \ref{sec : dispersive2} that the quicker growth for large $\blambda$ and the slower decay for small $\blambda$ creates the discrepancy of the powers of $t$. Consequently, the long time dispersive estimates on $\mathbb{H}^{n + 1}$ is at a lower speed than on Euclidean space, though the short time estimates are the same with Euclidean case. On the other hand, we also get something better. Near the spatial infinity, the spectral measure $dE_{\sqrt{(\Delta - n^2/4)_+}}$ as well as the Schr\"{o}dinger propagator $e^{it\Delta}$ gains an exponential decay factor. We can crudely interpret that as follows. Near the spatial infinity the conformal metric creates an exponentially growing volume. Since the spectral measure is globally $L^2$ integrable, it must decay exponentially to cancel the exponential growth volume as $d(z, z')$ goes to infinity. Not only does this long distance exponential decay compensate the low speed for long times but it also gives better Strichartz estimates. One may note a distinctive phenomenon that the admissible set \eqref{schrodinger admissibility} is much wider than the set of sharp Schr\"{o}dinger admissible pairs on Euclidean space, which satisfy\begin{equation*} \frac{2}{q} + \frac{n + 1}{r} = \frac{n + 1}{2}, \quad q, r \geq 2, \quad (q, r) \neq (2, \infty).\end{equation*} Banica, Carles and Staffilani \cite{Banica-Carles-Staffilani} first observed this while studying the radial solution of the Schr\"{o}dinger equations on $\mathbb{H}^{n + 1}$. Inspired by that, Anker and Pierfelice \cite{Anker-Pierfelice} independently Ionescu and Staffilani \cite{Ionescu-Staffilani-Mathann-2009} proved the Strichartz estimates on $\mathbb{H}^{n + 1}$ with such admissiblity.

More generally, if we study an asymptotically hyperbolic manifold with conjugate points, what kind of dispersive estimate could we get instead?  The Schwartz kernel of the spectral measure at high energy is a Lagrangian distribution microlocally supported on the geodesic flow-out. Then the difficulty is that there will be no global expression for the geodesic distance function. Because of that, neither \eqref{eqn:euclidean dispersive} nor \eqref{hyperbolic dispersive} will hold. Alternatively, one can microlocalize the spectral measure around the diagonal with a pair of pseudodifferential operators $(Q_k, Q_k^\ast)$.  Consequently we can get some near-diagonal estimates for the spectral measure, which give some sort of dispersive estimate for corresponding microlocalized Schr\"{o}dinger propagators. It is sufficient to establish the Strichartz estimates. Guillarmou, Hassell and Sikora \cite{Guillarmou-Hassell-Sikora}, Hassell and Zhang \cite{Hassell-Zhang}, applied this technique to the spectral measure and the Schr\"{o}dinger propagator on asymptotically conic (Euclidean) manifolds.

Due to above distinctive geometric and spectral properties on asymptotically hyperbolic manifolds, we will integrate some existing techniques to prove Theorem \ref{inhomogeneous strichartz}. First of all, we primarily follow the standard argument due to Kato, Ginibre-Velo and Keel-Tao. However, as mentioned before, their method doesn't exploit the distinctive phenomenons of hyperbolic type spaces, including the spatial decay at infinity and the low speed at long times. Therefore, borrowing the trick of Anker-Pierfelice and Ionescu-Staffilani, we split the time-space norm of the solution in temporal variables as well as the Schr\"{o}dinger propagator in spectral parameter. Also inspired by the microlocalization argument of Guillarmou-Hassell-Sikora and Hassell-Zhang, we establish the microlocalized dispersive estimates in Proposition \ref{dispersive estimate1} and in Proposition \ref{dispersive estimate2} to cope with the conjugate points.
 
The geometric microlocal technique used for the spectral measure does require the non-trapping condition and conformal compactness on the space. Bouclet \cite{Bouclet-apde-2011} investigates the local-in-time homogeneous Strichartz estimates without loss on more general asymptotically hyperbolic manifolds without taking these advantages. The author constructed a parametrix for Schr\"{o}dinger propagators. However, the issue here is that the error may be difficult to control as time goes to infinity. For the consideration of long time behaviour, one needs an exact spectral measure or propagator (a function of spectral measure) in these estimates. In the joint work of Hassell and the present author \cite{Chen-Hassell2}, we studied the spectral measure estimates on asymptotically hyperbolic manifolds, which enables us to study the global-in-time Strichartz estimates.

The paper is organized as follows. First of all, we shall review the asymptotically hyperbolic manifold and the spectral measure in Section \ref{sec : spectral measure}. Based on the spectral measure estimates, we introduce the microlocalized / truncated expressions of Schr\"{o}dinger propagators. We then turn to the proof of $L^2$-energy estimates. In Section \ref{sec : dispersive1} and Section \ref{sec : dispersive2}, we establish the dispersive estimates for microlocalized / truncated propagators. The Strichartz estimates will be proved in the last two sections.

\section{Spectral measure on asymptotically hyperbolic manifolds}\label{sec : spectral measure}

A conformally compact manifold $X$ is an $(n + 1)$-dimensional manifold with boundary $\partial X$, compact closure $\bar{X}$ and endowed with a Riemannian metric $g$ which extends smoothly to its closure. One can write $$g = \frac{dx^2}{x^2} + \frac{h(x, y, dx, dy)}{x^2},$$ where $x$ is a boundary defining function, and $h$ is a metric on the boundary but depending parametrically on $x$. Mazzeo \cite{Mazzeo-JDG-1988} showed $g$ is complete and its sectional curvature approaches $- |dx|^2_{x^2 g}$ as it approaches the boundary. In particular, $g$ is said to be asymptotically hyperbolic if $- |dx|^2_{x^2 g} = - 1$.

Let $\Delta$ be the Laplacian, on $(n+1)$-dimensional non-trapping asymptotically hyperbolic manifold $X$. As on $\mathbb{H}^{n + 1}$, the continuous spectrum of $\Delta$ is contained in $[n^2/4 , \infty)$. Additionally, Mazzeo \cite{Mazzeo-1991} showed the point spectrum is contained in $(0, n^2/4)$.

Mazzeo and Melrose \cite{Mazzeo-Melrose} constructed the resolvent $(\Delta - \bsigma(n - \bsigma))^{-1}$ on asymptotically hyperbolic manifolds for fixed parameter $\bsigma$ and proved it has a meromorphic extension except at points $(n + 1)/2 - \mathbb{Z}_+$. The resolvent they construct is a $0$-pseudo differential operator plus a smooth function on the $0$-blown up double space $X \times_0 X$ ( or $X_0^2$ for short), where the space $X \times_0 X$ is obtained by blowing up the boundary of the diagonal $\partial \text{diag} = \{(0, y, 0, y)\} \in X^2$. \begin{center}\includegraphics[width=0.5\textwidth]{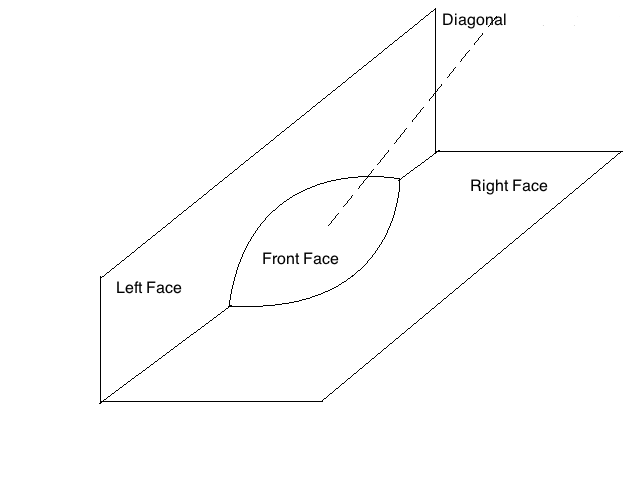}\end{center} From here on, we will work on $X_0^2$ instead of $X^2$ for the nice expression of the resolvent and the spectral measure. A very important feature of $X_0^2$ is that the front face is a bundle with fibres similar to hyperbolic space. Therefore hyperbolic space is a good model for asymptotically hyperbolic manifolds.  Apart from these, we also get following useful asymptotic expansion of the geodesic distance function near the boundaries of $X_0^2$  \begin{equation*} d(z, z') = - \log \rho_L - \log \rho_R + b(z, z'),\end{equation*} where $\rho_L$ and $\rho_R$ are boundary defining functions of the left and the right faces respectively, $b$ is a uniformly bounded function on $X_0^2$. See \cite[Proposition 10]{Chen-Hassell2}. In particular, $b(z, z')$ is smooth on $X_0^2$ in the case of asymptotically hyperbolic manifolds of Cartan-Hadamard type. This was observed on hyperbolic spaces and proved on asymptotically hyperbolic manifolds by  Melrose, S\'{a} Barreto and Vasy \cite{Melrose-Sa Barreto-Vasy}. On general asymptotically hyperbolic manifolds, arising conjugate points ruin the smoothness of $b$ but we still get the boundedness of $b$. Consequently, we have the asymptotic \begin{equation}
\label{asymptotic of distance function} e^{- d(z, z')} \approx \rho_L \rho_R, \quad \mbox{if $\rho_L \rho_R$ is small.}
\end{equation}

Additionally, Melrose, S\'{a} Barreto and Vasy \cite{Melrose-Sa Barreto-Vasy}, Wang \cite{Wang}, and Hassell and the present author \cite{Chen-Hassell1} constructed the semiclassical resolvent at high energy (near the infinity of the spectrum). Specifically, the high energy resolvent defined on $X_0^2$ is a $0$-pseudo differential operator plus a Fourier integral operator microlocally supported on the union of the diagonal conormal bundle and its bicharacteristic flow-out.  To avoid unnecessary technical details, we wouldn't repeat the bulky theories about $0$-calculus, blow-up, flow-out, Lagrangian distribution, intersecting Lagrangian and etc, but refer the readers to \cite{Mazzeo-Melrose, Melrose-Sa Barreto-Vasy, Chen-Hassell1}.

Based on the results of the resolvent,  Hassell and the author \cite{Chen-Hassell2} studied the spectral measure on asymptotically hyperbolic manifolds, via Stone's formula $$2 \pi i \, dE_{L} (\blambda) = R_{L}(\blambda + i0) - R_L(\blambda - i0),$$ provided $\blambda$ is in the continuous spectrum of $L$.\footnote{We use Greek letters $\lambda, \mu, \zeta$ to denote the phase variables on cotangent bundle, respectively bold Greek letters $\blambda, \bmu, \bzeta$ to denote spectral parameters.} Since the spectral measure is defined on the continuous spectrum $(n^2/4, \infty)$, we are in particular concerned about the asymptotic behaviour around two endpoints $n^2/4$ (low energy) and $\infty$ (high energy) respectively. Because of the absence of imbedded eigenvalues, the intermediate values can be estimated in either way.

On the one hand, the spectral measure $dE_P(\blambda)$\footnote{Here we use another spectral parameter $\blambda \in [0, \infty)$ with $\bsigma = n/2 \pm i \blambda$.} with $P = \sqrt{(\Delta - n^2/4)_+}$ for small $\blambda$ has a similar structure to the resolvent near the bottom of the spectrum. As on $\mathbb{H}^{n + 1}$, it is convenient to assume the smoothness of the resolvent at the bottom of the spectrum to gain the asymptotic of spectral measure. We say there is no resonance at the bottom of the continuous spectrum if the resolvent is analytic at $n^2/4$. \footnote{Intriguingly, it is still unknown that what geometric conditions amounts to the analyticity of the resolvent at the bottom of spectrum. However, there are some sufficiency results. For instance Guillarmou and Qing \cite{Guillarmou-Qing} shows that the largest real scattering pole of $(\Delta - \bsigma(n - \bsigma))^{-1}$ on an $n + 1$-dimensional conformally compact Einstein manifold $(X, g)$ is less than $n/2 - 1$ if and only if the conformal infinity of $(X, g)$ is of positive Yamabe type, where $n > 1$.} With this hypotheses of analyticity at $n^2/4$, we \cite{Chen-Hassell2}  deduce, from the resolvent of Mazzeo and Melrose, that \begin{equation}\label{low energy spectral measure}dE_{P}(\blambda)(z, z^\prime) = \blambda \Big((\rho_L\rho_R)^{n/2 + i\blambda} a(\blambda, z, z')  -  (\rho_L\rho_R)^{n/2 - i\blambda} a(- \blambda, z, z')\Big) \quad \mbox{when $\blambda < 1$ },\end{equation} where $P = \sqrt{(\Delta - n^2/4)_+}$ and $a \in C^\infty([0, 1]_{\blambda^{-1}} \times X_0^2)$. 
A quick corollary of this result is that \begin{equation}\label{low spectral measure upper bound} |dE_P(\blambda)(z, z')| \leq C \blambda^2 (1 + d(z, z')) e^{-nd(z, z')/2}.\end{equation} One may note the spectral measure doesn't vanish as rapidly as its counterparts on some other space do at low energy. For example, Guillarmou, Hassell and Sikora \cite{Guillarmou-Hassell-Sikora}, Hassell and Zhang \cite{Hassell-Zhang} showed on $n + 1$-dimensional asymptotically Euclidean manifolds there is a pseudodifferential operator partition of unity $$I = \sum_{i = 1}^N Q_i$$ such that the microlocalized spectral measure   reads \begin{equation}\label{eqn:spectral measure conic}Q_i dE_{P} Q_i^\ast(\blambda, z, z') = \blambda^n e^{i \blambda d(z, z^\prime)} a_+(\blambda, z, z^\prime) + \blambda^n e^{- i \blambda d(z, z^\prime)} a_-(\blambda, z, z^\prime),\end{equation} where the derivatives of $a_{\pm}$ obeys $$\bigg|\frac{d^\alpha}{d\blambda^\alpha} a_{\pm}(\blambda, z, z^\prime)\bigg| \leq C \blambda^{-\alpha} \big(1 + \blambda d(z, z^\prime)\big)^{-n/2}.$$ We remark that one could remove the diagonal microlocalization $(Q_i, Q_i^\ast)$ in case there are no conjugate points on the manifold; however it is necessary for general settings. Apart from the $3$-dimensional space where $n = 2$, the spectral measure on asymptotically hyperbolic manifolds is unable to provide such decay for low energies. Nonetheless, the property \eqref{asymptotic of distance function} for large distance on asymptotically hyperbolic manifolds compensates the lack of decay with an exponential vanishing at spatial infinity.

On the other hand, the spectral measure for large $\blambda$ shares a microlocal structure with the resolvent at high energies. Suppose we have local coordinates $\{(x, y_1, \dots, y_n)\}$ near $\partial X$ and local coordinates $\{(z_1, \dots, z_{n+1})\}$  away from $\partial X$. The $0$-cotangent bundle ${}^0 T^\ast X^\circ$, introduced by Mazzeo and Melrose \cite{Mazzeo-Melrose}, is a vector bundle with sections \begin{eqnarray*}\lambda \frac{dx}{x} + \mu_1 \frac{dy_1}{x} + \cdots + \mu_n \frac{dy_n}{x} && \mbox{near $\partial X$}\\ \zeta_1 \frac{dz_1}{x} + \cdots + \zeta_{n + 1} \frac{dz_{n + 1}}{x}  && \mbox{away from $\partial X.$} \end{eqnarray*} Recall from \cite{Chen-Hassell1} that the microlocal support (or wavefront set) of the high energy resolvent is the diagonal conormal bundle $N^\ast \text{diag} \subset {}^0 T X_0^2$ and its bicharacteristic flow-out $\Lambda$, which is contained in ${}^0 S X^\circ \times {}^0 S X^\circ$, where $${}^0 S^\ast X^\circ = \{|\zeta|^2 = 1 \, \mbox{or} \, |\lambda|^2 + |\mu|^2 = 1 \} \subset {}^0TX^\circ.$$  By Stone's formula, the spectral measure is microlocally supported on $\Lambda$, while the singularity at $N^\ast \text{diag}$ cancels out by the subtraction between the outgoing resolvent and the incoming resolvent. Therefore the spectral measure is a Fourier integral operator associated with the Lagrangian $\Lambda$. Apart from the boundary behaviour, this Lagrangian structure on asymptotically hyperbolic manifolds is analogous with the case of asymptotically Euclidean. So we can gain similar spectral measure estimates at high energies to \eqref{eqn:spectral measure conic}.

To state the microlocalized spectral measure estimates explicitly, let us recall the partition of unity on ${}^0 T^\ast X^\circ$ in \cite{Chen-Hassell2}.   First of all, we take $Q_0$ microlocally supported away from the spherical bundle ${}^0 S^\ast X^\circ$, say $\{|\zeta|^2 > 3/2 \, \mbox{or} \, |\lambda|^2 + |\mu|^2 > 3/2\}$, which contains the wavefront set of the spectral measure. On the other hand, we divide the interval $(-3/2, 3/2)$ into a union of intervals $I_1, \dots, I_{N_1}$ with overlapping interiors, and
with diameter $\leq \delta$, which is a sufficiently small number, whilst each $I_i$ intersects only $I_{i - 1}$ and $I_{i + 1}$. We also take a small strip neighbourhood of the boundary such that the sectional curvature is negative; in the meantime, we divide the $0$-cotangent bundle over this strip into a union of small slices $B_1, \dots, B_{N_1}$ such that every $B_i \subset \{ \lambda \in I_i \}$. Then we have $0$th-order pseudodifferential operators $Q_1, \dots, Q_{N_1}$ supported on them repectively. Next, we divide the remaining region into the union of small balls $B_{N_1 + 1}, \dots B_{N_2}$ with diameter $\leq \eta$, which is also sufficiently small, and have $Q_{N_1 + 1}, \dots Q_{N_2}$ supported on them respectively.  With this partition, we have the estimates for the microlocalized spectral measure. \begin{proposition}[\cite{Chen-Hassell2}]\label{derivative spectral measure estimate} One can choose a pseudodifferential operator partition of unity $$Id = \sum_{k = 0}^{N} Q_k(\blambda),$$ where $Q_k$ for $k \neq 0$ is supported around the spherical bundle, such that $Q_k$ for any $k$ are uniformly ($L^2$-)bounded over $\blambda$ and \begin{eqnarray*} Q_k(\blambda)  dE_{P}(\blambda) Q_k^\ast (\blambda) = \blambda^n e^{i\blambda d(z, z')} a_+(\blambda) + \blambda^n e^{- i\blambda d(z, z')} a_-(\blambda) + O(\blambda^{- \infty}), \quad \mbox{for large $\blambda,$}\end{eqnarray*} where $a_\pm$ are defined on the forward and backward bicharacteristic flow respectively and satisfying \begin{eqnarray*}\frac{d^j}{d\blambda^j}  a_\pm(\blambda) = \left\{ \begin{array}{l@{\quad , \quad}l} O\Big(\blambda^{ - j}\big(1 + \blambda d(z, z^\prime)\big)^{- n/2} \Big)  & \mbox{if $d(z, z^\prime)$ is small}\\  O\Big(\blambda^{- n/2 - j} e^{- n d(z, z^\prime) / 2}\Big) & \mbox{if $d(z, z^\prime)$ is large}\end{array}\right..\end{eqnarray*} \end{proposition}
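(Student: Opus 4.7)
My plan is to derive the microlocalized spectral measure from the structure of the high-energy resolvent constructed in \cite{Chen-Hassell1, Mazzeo-Melrose, Melrose-Sa Barreto-Vasy}, via Stone's formula, and then package the resulting Lagrangian distribution into symbolic estimates using the microlocal partition of unity described just before the statement.

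First, I would recall that the semiclassical resolvent $R_P(\blambda \pm i0)$ at high energy is, modulo smoothing, a sum of two pieces: a $0$-pseudodifferential operator with wavefront on the diagonal conormal bundle $N^*\diag$, and a Lagrangian distribution associated with the outgoing (respectively incoming) bicharacteristic flow-out $\Lambda_\pm$ from $N^*\diag$ inside ${}^0S^*X^\circ \times {}^0 S^*X^\circ$. Applying Stone's formula $2\pi i \, dE_P(\blambda) = R_P(\blambda+i0) - R_P(\blambda-i0)$, the conormal piece is common to both terms (it is the real part in a suitable sense) and cancels, so that $dE_P(\blambda)$ is a Lagrangian distribution purely associated with $\Lambda = \Lambda_+ \cup \Lambda_-$. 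The overall power $\blambda^n$ is fixed by the order of $dE_P(\blambda)$ as a Lagrangian distribution of order $n/2$ on a $2(n+1)$-dimensional base, combined with a $\blambda$-rescaling.

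Next, I would use the partition of unity $\{Q_k\}$ to reduce to a single-phase description. On the support of $Q_k$ with $k\neq 0$ (a small slice or ball around a piece of ${}^0S^*X^\circ$), one may choose local coordinates in which the outgoing flow-out sheet is parametrized by a single phase function; near the boundary, this phase is exactly $\blambda d(z,z')$ because the strip is chosen with negative sectional curvature, ruling out conjugate points there, while in the interior the balls are chosen small enough (diameter $\leq \eta$) that within the support of $Q_k Q_k^*$ only a single sheet of $\Lambda_+$ (and one of $\Lambda_-$) contributes. This is the step where the microlocalization becomes essential: in the presence of conjugate points the flow-out $\Lambda$ has several sheets intersecting over a single $(z,z')$, and $d(z,z')$ is not globally single-valued, but $Q_k \, dE_P \, Q_k^*$ only sees the sheet through the support of $Q_k$. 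After inserting such a parametrization and invoking the half-density formalism, one reads off
\[
Q_k(\blambda)\, dE_P(\blambda)\, Q_k^*(\blambda) = \blambda^n e^{i\blambda d(z,z')} a_+(\blambda) + \blambda^n e^{-i\blambda d(z,z')}a_-(\blambda) + O(\blambda^{-\infty}),
\]
with the $O(\blambda^{-\infty})$ error absorbing the $Q_0$ component (whose wavefront lies off ${}^0S^*X^\circ$, i.e.\ off the wavefront of the spectral measure) and the residual smoothing terms.

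Finally, to establish the derivative bounds on $a_\pm$, for small $d(z,z')$ I would use the standard Lagrangian-distribution symbol estimates: each $\blambda$-derivative either hits the amplitude, which is a classical symbol in $\blambda$, or produces a factor of $d(z,z')$, which is controlled by $\blambda^{-1}(\blambda d(z,z'))$, yielding the claimed $\blambda^{-j}(1+\blambda d(z,z'))^{-n/2}$ after accounting for the $(n+1)/2$-dimensional stationary-phase gain in the fiber variables. For large $d(z,z')$ one is necessarily near the boundary of $X_0^2$, where by \eqref{asymptotic of distance function} $e^{-d(z,z')} \approx \rho_L\rho_R$; the resolvent construction of Mazzeo-Melrose produces amplitudes that are smooth on $X_0^2$ and of the form $(\rho_L\rho_R)^{n/2+i\blambda}$ times a bounded symbol, and consequently $a_\pm$ inherits the factor $(\rho_L\rho_R)^{n/2} \approx e^{-n d(z,z')/2}$ together with an extra $\blambda^{-n/2}$ from matching the low-frequency expansion \eqref{low energy spectral measure} to the high-frequency Lagrangian form. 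The main obstacle is precisely the third step: managing the conjugate points so that after microlocalization one really obtains a single-phase Lagrangian distribution with the stated symbolic estimates, which is why both the $\delta$-slicing near the boundary and the $\eta$-ball decomposition in the interior must be performed with care.
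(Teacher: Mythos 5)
The paper does not prove Proposition \ref{derivative spectral measure estimate}; it is quoted verbatim from the companion paper \cite{Chen-Hassell2}, and the only ``proof'' content given here is the construction of the partition of unity $\{Q_k\}$ recalled in the paragraph just before the statement (the near-boundary slices $B_1,\dots,B_{N_1}$ indexed by $\lambda$-intervals $I_i$ and the interior balls $B_{N_1+1},\dots,B_{N_2}$). So there is nothing in \emph{this} paper to compare against, and your sketch should be judged on whether it is a plausible reconstruction of the argument in \cite{Chen-Hassell2}.

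As such a reconstruction, your outline is sound in its main structure: the spectral measure inherits the Lagrangian structure from the high-energy resolvent via Stone's formula with the diagonal conormal part cancelling; the microlocalization $Q_k(\cdot)Q_k^*$ cuts $\Lambda$ down to a region where a single phase function parametrizes each of $\Lambda_\pm$, taken to be $\pm\blambda d(z,z')$ on the (negatively curved, hence conjugate-point-free) boundary strip and on interior balls of diameter $\leq\eta$; and the symbol bound $\blambda^{-j}(1+\blambda d(z,z'))^{-n/2}$ for small $d$ is the standard fiber stationary-phase gain for a Lagrangian distribution of the stated order. One point in your explanation is off, however. You attribute the extra $\blambda^{-n/2}$ in the large-$d$ regime to ``matching the low-frequency expansion \eqref{low energy spectral measure} to the high-frequency Lagrangian form.'' That cannot be the mechanism: as the paper itself emphasizes in Section \ref{sec : propagator decomposition}, there is a genuine \emph{discrepancy} in the $\blambda$-order between the low-energy form \eqref{low energy spectral measure} (which is $O(\blambda)$) and the high-energy form (which is $O(\blambda^{n/2})$ for $d$ large), and that discrepancy is precisely why the propagator has to be split into $U_{\text{low}}$ and the $U_k$; there is no matching. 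The correct source of the $\blambda^{-n/2-j}e^{-nd(z,z')/2}$ bound is that for $d(z,z')$ bounded away from zero the factor $(1+\blambda d(z,z'))^{-n/2}$ already contributes $\blambda^{-n/2}$ up to constants, while the additional $e^{-nd(z,z')/2}\approx(\rho_L\rho_R)^{n/2}$ decay is read off from the boundary structure of the resolvent amplitude in \cite{Mazzeo-Melrose, Melrose-Sa Barreto-Vasy, Chen-Hassell1}, not from any low/high-energy interpolation. With that correction the sketch is a faithful outline of the expected proof, though of course the hard analytic content --- the intersecting-Lagrangian parametrix for the resolvent and the uniform control over $\blambda$ of the symbol classes --- lives entirely in \cite{Chen-Hassell1, Chen-Hassell2} and is not reproduced by the sketch.
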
 This result is actually better than \eqref{eqn:spectral measure conic} for large $\blambda$. Not only does it give the same growth rate in $\blambda$, but there is also a spatial exponential decay.

Moreover, the restriction theorem (in the sense of Stein and Tomas) $$\|dE_{P}(\blambda)\|_{L^p \rightarrow L^{p^\prime}} \leq C \blambda^{(n + 1) (1/p - 1/p^\prime) - 1} \quad \mbox{where $p \in [1, 2(n + 2)/(n + 4)]$},$$ at high energies on non-trapping asymptotically hyperbolic manifolds follows from above spectral measure estimates. It is well-known that Strichartz \cite{Strichartz} insightfully points out the deep relationship between Strichartz estimates and restriction theorem. It motivates us to show the Strichartz estimates from these spectral measure estimates, which are sufficient to give restriction theorem. In fact, combining Strichartz estimates and dispersive estimates in this paper with our previous results of resolvent in \cite{Chen-Hassell1}, spectral measure with applications to restriction theorem and spectral multiplier in \cite{Chen-Hassell2}, we have elucidated the following diagram on non-trapping asymptotically hyperbolic manifolds. $$\begin{array}{ccccc}
&&\begin{array}{c}\mbox{$L^p$ boundedness} \\ \mbox{of spectral multiplier}\end{array} &&\\ &&\uparrow && \\ \begin{array}{c}\mbox{Strichartz estimate}\\ \mbox{of Schr\"{o}dinger equation}\end{array}&\leftarrow&\begin{array}{c}\mbox{Restriction theorem}\\ \mbox{of spectral measure}\end{array} &&  \\ \uparrow&&\uparrow &&\\ \begin{array}{c}\mbox{Dispersive estimate}\\ \mbox{of Schr\"{o}dinger propagator}\end{array}&\leftarrow&\begin{array}{c}\mbox{Pointwise estimates} \\ \mbox{for spectral measure}\end{array} &&\\ &&\uparrow && \\ &&\begin{array}{c}\mbox{Resolvent construction} \\ \mbox{near continuous spectrum}\end{array} &&
\end{array}$$

\section{Schr\"{o}dinger propagators via spectral measure}\label{sec : propagator decomposition}

The spectral theorem of projection valued measure form for unbounded self-adjoint operators gives following expression of Schr\"{o}dinger propagators $e^{i t \Delta}$ via spectral measure, $$e^{i t \Delta} = e^{i t  n^2/4}\int_0^\infty e^{i t \blambda^2} dE_{P}(\blambda).\footnote{Since we assume $f$ and $F$ are orthorgonal to the eigenfunction spaces, the discrete terms don't show up.}$$ 

We are motivated to employ the spectral measure estimates, including the microlocalized form at high energy (say $\blambda > 1$) together with the global form at low energy (say $\blambda < 1$), to estimate the Schr\"{o}dinger propagator. As seen on Euclidean space or asymptotically conic manifolds, the spectral measure behaves uniformly on the full continuous spectrum, for example see \cite{Guillarmou-Hassell-Sikora}. However, comparing \eqref{low energy spectral measure} and Proposition \ref{derivative spectral measure estimate}, one can see that the discrepancy of the order of $\blambda$ between low energies and high energies on $n + 1$-dimensional asymptotically hyperbolic manifolds for $n + 1 > 3$. We thus have to split up the propagator to remedy the discrepancy.

One may pick two smooth bump functions $\chi_{\text{low}}$ supported in $[0, 2)$ and $\chi_\infty$ supported in $(1, \infty)$ such that $\chi_{\text{low}} + \chi_\infty = 1$  and split the propagator as $$U(t) = \int_0^\infty e^{it\blambda^2} \chi_{\text{low}}(\blambda) \, dE_{P}(\blambda) + \int_0^\infty e^{it\blambda^2} \chi_\infty(\blambda) \, dE_{P}(\blambda).$$ In accordance with Proposition \ref{derivative spectral measure estimate}, we also will have to microlocalize the spectral measure at high energies by a family of semiclassical pseudodifferential operators $\{Q_k\}_{0}^{N}$ as follows $$U_k = \int_0^\infty e^{it\blambda^2} \chi_\infty(\blambda) Q_k(\blambda) \, dE_{P}(\blambda).$$ In summary, we truncate and microlocalize the propagator and gain following decomposition $$e^{it\Delta - it n^2/4} = U_{\text{low}} + \sum_{k = 0}^N U_k.$$ 

Returning to the Cauchy problem \eqref{schrodinger cauchy}, the solution $u$ is given by Duhamel's formula $$u(t, z) = e^{it \Delta}f(z) - i \int_0^t e^{i(t - s)\Delta} F(s, z) \,ds.$$ To prove the Strichartz estimates, we shall invoke Keel-Tao bilinear approach. In our case, we reduce to the energy estimates and dispersive estimates for following bilinear propagators \begin{eqnarray}\label{eqn:prpgt1}U_{\text{low}}(t) U_{\text{low}}^\ast(s) &=& \int_0^\infty e^{i(t - s) \blambda} \chi_{\text{low}} dE_P(\blambda),\\ \label{eqn:prpgt2} U_{k}(t) U_{k}^\ast(s) &=& \int_0^\infty e^{i(t - s) \blambda} \chi_{\infty} Q_k(\blambda) dE_P(\blambda)Q_k^\ast(\blambda),\\ \label{eqn:prpgt3} U_j(t) U_k^\ast(s) &=& \int_0^\infty e^{i(t - s) \blambda} \chi_{\infty} Q_j(\blambda) dE_P(\blambda)Q_k^\ast(\blambda). \end{eqnarray} Aside from these, we also remark the estimates for $U_{\text{low}}(t) U_{k}^\ast(s)$ or $U_{k}(t) U_{\text{low}}^\ast(s)$ are the same with $U_{\text{low}}(t) U_{\text{low}}^\ast(s)$.

In the next three sections, we prove the energy estimates for them in Proposition \ref{energy estimate} and dispersive estimates for \eqref{eqn:prpgt1} and \eqref{eqn:prpgt2} in Proposition \ref{dispersive estimate1} and for \eqref{eqn:prpgt3} in Proposition \ref{dispersive estimate2} respectively.

\section{Energy estimates for  Schr\"{o}dinger propagators at high energy}

We shall prove the $L^2$-boundedness of microlocalized / truncated Schr\"{o}dinger propagators. More precisely, \begin{proposition}[Energy estimates]\label{energy estimate}The propagator $e^{it\Delta}$, low energy truncated propagator $U_{\text{low}}$, microlocalized high energy truncated propagators $U_0$ and $U_k$ for $k = 1, 2, \dots$ are all $L^2$-bounded\end{proposition}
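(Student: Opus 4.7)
The plan is to handle the four operators in increasing order of difficulty. The propagator $e^{it\Delta}$ is unitary on $L^2(X)$ by Stone's theorem applied to the self-adjoint Laplacian. For the low-energy truncation, observe that $U_{\text{low}} = m_{\text{low}}(P)$ is the spectral multiplier associated with $m_{\text{low}}(\blambda) = e^{it\blambda^2} \chi_{\text{low}}(\blambda)$, which satisfies $\|m_{\text{low}}\|_\infty \leq \|\chi_{\text{low}}\|_\infty$ uniformly in $t$. The Borel functional calculus for $P = \sqrt{(\Delta - n^2/4)_+}$ then yields the claim immediately.

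The substantive case is $U_k$ for $k = 0, 1, 2, \dots$, because $Q_k(\blambda)$ depends on the spectral parameter and hence $U_k$ is not literally a function of $P$. My plan is to factor
\begin{equation*}
U_k(t) = A_k \cdot e^{itP^2}, \qquad A_k := \int_0^\infty \chi_\infty(\blambda) Q_k(\blambda)\, dE_P(\blambda),
\end{equation*}
by inserting $e^{itP^2} = \int_0^\infty e^{it\blambda'^2}\, dE_P(\blambda')$ and using the orthogonality $dE_P(\blambda)\, dE_P(\blambda') = \delta(\blambda - \blambda')\, dE_P(\blambda)$ of the projection-valued measure to collapse the double integral. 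Since $e^{itP^2}$ is unitary on the absolutely continuous spectral subspace, it suffices to show that the time-independent operator $A_k$ is bounded on $L^2(X)$.

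For $A_k$ I would pass to $TT^*$: the same orthogonality identity gives
\begin{equation*}
A_k A_k^* = \int_0^\infty |\chi_\infty(\blambda)|^2\, Q_k(\blambda)\, dE_P(\blambda)\, Q_k^*(\blambda)\, d\blambda,
\end{equation*}
whose integrand is precisely the microlocalized spectral measure of Proposition \ref{derivative spectral measure estimate}. Substituting the Lagrangian expansion $\blambda^n e^{i\blambda d(z,z')} a_+(\blambda) + \blambda^n e^{-i\blambda d(z,z')} a_-(\blambda) + O(\blambda^{-\infty})$ and repeatedly integrating by parts in $\blambda$ against the phase $e^{\pm i\blambda d}$, I expect to produce a Schwartz kernel for $A_k A_k^*$ whose pointwise size is controlled by the derivative bounds $|\partial_\blambda^j a_\pm| \lesssim \blambda^{-j}(1 + \blambda d)^{-n/2}$ for bounded $d$ and $|\partial_\blambda^j a_\pm| \lesssim \blambda^{-n/2 - j} e^{-nd/2}$ for large $d$. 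A Schur test on the resulting kernel (with the near-diagonal singularity absorbed via the zeroth-order pseudodifferential structure of $A_k$) then delivers the $L^2$-boundedness of $A_k A_k^*$, and hence of $A_k$.

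The main obstacle I anticipate is balancing the polynomial growth factor $\blambda^n$ against the oscillatory gain from integration by parts: one must choose the number of integrations so that the amplitudes $a_\pm$ absorb enough derivatives for absolute convergence in $\blambda$, while simultaneously delivering enough decay in $d(z,z')$ for Schur's criterion. The exponential factor $e^{-nd/2}$ in the large-distance regime of Proposition \ref{derivative spectral measure estimate} — characteristic of the asymptotically hyperbolic geometry — makes the far field trivial; the entire analytic content is concentrated in the near-diagonal regime, where one must exploit both the uniform $L^2$-boundedness of the individual $Q_k(\blambda)$ and the sharper Lagrangian structure of the spectral measure.
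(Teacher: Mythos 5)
Your factorization $U_k(t) = A_k\, e^{itP^2}$ with $A_k = U_k(0)$ is correct and is morally the same observation the paper makes when it shows that $U_{i,j}(t)U_{i,k}^\ast(t)$ is $t$-independent: the unitarity of $e^{itP^2}$ reduces the $L^2$-boundedness of $U_k(t)$ to the single operator $U_k(0)$, and your handling of $e^{it\Delta}$ and $U_{\text{low}}$ by functional calculus is fine.

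The gap is in the treatment of $A_k$. The identity $A_k A_k^\ast = \int_0^\infty |\chi_\infty(\blambda)|^2\, Q_k(\blambda)\, dE_P(\blambda)\, Q_k^\ast(\blambda)\, d\blambda$ is formally right, but the kernel of this operator has a non-integrable singularity at the diagonal. Indeed, inserting the high-energy expansion $\blambda^n e^{\pm i\blambda d}a_\pm$ with $|a_\pm|\lesssim (1+\blambda d)^{-n/2}$ and performing the $\blambda$-integral produces a kernel behaving like $d(z,z')^{-(n+1)}$ as $d\to 0$ — precisely the singularity of a zeroth-order pseudodifferential operator in $n+1$ dimensions, for which $\sup_z\int |K(z,z')|\,dg_{z'}$ diverges logarithmically. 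The Schur test therefore fails outright, and oscillatory cancellation cannot rescue it since Schur only sees $|K|$. Your parenthetical "absorbed via the zeroth-order pseudodifferential structure of $A_k$" is exactly the missing step: $A_k$ is built from a $\blambda$-dependent family $Q_k(\blambda)$ composed with the non-commuting projections $dE_P(\blambda)$, so it is not obviously a pseudodifferential operator, and asserting $L^2$-boundedness from that structure is circular because that boundedness is the statement to be proved.

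The paper sidesteps this by never attempting a pointwise kernel bound for $A_k A_k^\ast$. It performs a dyadic decomposition $U_k=\sum_j U_{k,j}$ with $\blambda\sim 2^j$, verifies that each $U_{k,j}(t)U_{k,j'}^\ast(t)$ is $t$-independent, and then proves the Cotlar–Stein almost-orthogonality bound $\|U_{k,j}(0)U_{k,j'}^\ast(0)\|_{L^2\to L^2}\lesssim 2^{-|j-j'|}$. Crucially, this last estimate is obtained at the operator/symbol level — integration by parts in the oscillatory representation of $Q_{k,j}^\ast(\blambda)Q_{k,j'}(\bmu)$, combined with the uniform $L^2$-boundedness of spectral projections and semiclassical pseudodifferential operators — rather than by estimating a Schwartz kernel. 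That is what makes the near-diagonal singularity harmless. If you want to salvage your route, replacing the Schur test by a Cotlar–Stein argument after the dyadic decomposition is the natural fix, and it lands you back on the paper's proof.
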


\begin{proof}\footnote{This proof is essentially due to Hassell and Zhang \cite{Hassell-Zhang} in case of asymptotically Euclidean manifolds, as only minor modifications are needed here. But we give the detailed proof for the self-containedness of the paper. }

 The boundedness of $e^{it\Delta}$ and $U_{\text{low}}$ is clear. Since the entire cut off propagator at high energy is of course $L^2$-bounded, we can ignore the $k = 0$ term but only consider $U_k$ for $k = 1, 2, \dots.$

Our main tool is almost orthogonality lemma established by Cotlar, Knapp and Stein, see for example \cite[p. 620]{fourier analysis}. \begin{lemma}[Almost orthogonality]\label{almost orthogonality}Let $\{T_j\}_{j \in \mathbb{Z}}$ be a family of bounded operators on Hilbert space $H$ obeying $$\|T_j^\ast T_k\|_{H \rightarrow H} + \|T_j T_k^\ast\|_{H \rightarrow H} \leq \gamma(j - k) \quad \mbox{for any $j, k \in \mathbb{Z}$},$$ where the function $\gamma : \mathbb{Z} \rightarrow \mathbb{R}^{+}$ satisfies $\sum_{j \in \mathbb{Z}} \sqrt{\gamma(j)} < \infty$.Then linear operator $T$, the limit of $\sum_{|j| < N} T_j$ in the norm topology of $H$ as $N$ goes to infinity, is $H$-bounded.\end{lemma}

First of all, the propagators are well-defined on $L^2$ if the integrand is supported on a compact subset of $(0, \infty)$ in $\blambda$ as the pseudodifferential operator would be $L^2$-bounded uniformly with respect to $\blambda$. We want to extend the well-definedness to entire positive half real line by almost orthogonality.

The strategy is to get a decomposition of the microlocalized propagator such that every term is an integral of a compactly supported function with respect to the microlocalized spectral measure, and then show the almost orthogonality of the decomposition required in Lemma \ref{almost orthogonality}.

First of all, we take the decomposition with a compactly supported smooth function $\psi \in C_c^\infty [1/2, 2]$ valued in $[0, 1]$ such that $$\sum_j \psi\bigg(\frac{\blambda}{2^j}\bigg) = 1.$$ Then we define \begin{eqnarray*}U_{i, j}(t) &=& \int_0^\infty e^{it\blambda^2}  \chi_\infty(\blambda) \psi\bigg(\frac{\blambda}{2^j}\bigg) Q_i(\blambda) \, dE_{P}(\blambda)\\& = &- \int_0^\infty \frac{d}{d\blambda}\bigg(e^{it\blambda^2} \chi_\infty(\blambda) \psi\bigg(\frac{\blambda}{2^j}\bigg)Q_i(\blambda)\bigg) E_{P}(\blambda).\end{eqnarray*} and calculate as follows \begin{eqnarray*}U_{i, j}(t) U_{i, k}^\ast(t)&=& \int\!\!\!\int \frac{d}{d\blambda} \bigg( e^{it\blambda^2} \chi_\infty(\blambda) \psi\bigg(\frac{\blambda}{2^j}\bigg) Q_i(\blambda)\bigg) E_{P}(\blambda) \\&& \quad\quad\quad\times E_{P}(\bmu) \frac{d}{d\bmu} \bigg( e^{-it\bmu^2} \chi_\infty(\bmu) \psi\bigg(\frac{\bmu}{2^k}\bigg) Q_i^\ast(\bmu)\bigg) \, d\blambda d\bmu \\&=& \int\!\!\!\int_{\blambda \leq \bmu} \frac{d}{d\blambda} \bigg( e^{it\blambda^2} \chi_\infty(\blambda) \chi_\infty(\blambda) \psi\bigg(\frac{\blambda}{2^j}\bigg) Q_i(\blambda)\bigg) E_{P}(\blambda)\\&& \quad\quad\quad  \times\frac{d}{d\bmu} \bigg( e^{-it\bmu^2} \chi_\infty(\bmu) \psi\bigg(\frac{\bmu}{2^k}\bigg) Q_i^\ast(\bmu)\bigg) \, d\blambda d\bmu \\&& + \int\!\!\!\int_{\bmu \leq \blambda} \frac{d}{d\blambda} \bigg( e^{it\blambda^2} \chi_\infty(\blambda) \psi\bigg(\frac{\blambda}{2^j}\bigg) Q_i(\blambda)\bigg)  E_{P}(\bmu)\\&& \quad\quad\quad
\quad \times \frac{d}{d\bmu} \bigg( e^{-it\bmu^2} \chi_\infty(\bmu) \psi\bigg(\frac{\bmu}{2^k}\bigg) Q_i^\ast(\bmu)\bigg) \, d\blambda d\bmu \end{eqnarray*} We then perform integration by parts and get \begin{eqnarray*}U_{i, j}(t) U_{i, k}^\ast(t) &=& \int \frac{d}{d\blambda} \bigg( e^{it\blambda^2} \chi_\infty(\blambda) \psi\bigg(\frac{\blambda}{2^j}\bigg) Q_i(\blambda)\bigg) E_{P}(\blambda) \\&& \quad\quad\quad \times\bigg(- e^{-it\blambda^2} \chi_\infty(\blambda) \psi\bigg(\frac{\blambda}{2^k}\bigg) Q_i^\ast(\blambda)\bigg) \, d\blambda  \\&& + \int \bigg(- e^{it\bmu^2} \chi_\infty(\bmu) \psi\bigg(\frac{\bmu}{2^j}\bigg) Q_i(\bmu)\bigg)  E_{P}(\bmu)\\&& \quad\quad\quad \times\frac{d}{d\bmu} \bigg( e^{-it\bmu^2} \chi_\infty(\bmu) \psi\bigg(\frac{\bmu}{2^k}\bigg) Q_i^\ast(\bmu)\bigg) \,  d\bmu \\ &=& \int  \chi_\infty^2(\blambda) \psi\bigg(\frac{\blambda}{2^j}\bigg)\psi\bigg(\frac{\blambda}{2^k}\bigg) Q_i(\blambda) dE_{P}(\blambda)Q_i^\ast(\blambda) \\ &=& \int \frac{d}{d\blambda}\bigg( \chi_\infty^2(\blambda) \psi\bigg(\frac{\blambda}{2^j}\bigg)\psi\bigg(\frac{\blambda}{2^k}\bigg) Q_i(\blambda)\bigg) E_{P}(\blambda)Q_i^\ast(\blambda)\\ && + \int  \chi_\infty^2(\blambda) \psi\bigg(\frac{\blambda}{2^j}\bigg)\psi\bigg(\frac{\blambda}{2^k}\bigg) Q_i(\blambda) E_{P}(\blambda) \frac{d}{d\blambda}Q_i^\ast(\blambda) . \end{eqnarray*}

As implied, $U_{i, j}(t) U_{i, k}^\ast(t)$ is indeed $t$-independent. Therefore, we shall prove the $L^2$-boundedness for all $t$ via $U_{i, j}(0) U_{i, k}^\ast(0)$, which equals $$\int\!\!\!\int E_{P}(\blambda) \frac{d}{d\blambda}\bigg(\chi_\infty(\blambda) \psi\bigg(\frac{\blambda}{2^j}\bigg)Q_i^\ast(\blambda)\bigg) \frac{d}{d\bmu}\bigg(Q_i(\bmu)\psi\bigg(\frac{\bmu}{2^k}\bigg)\chi_\infty(\bmu)\bigg) E_{P}(\bmu) \, d\blambda d\bmu.$$

We claim $U_{i, j}(0) U_{i, k}^\ast(0)$ obeys the almost orthogonality estimate $$\|U_{i, j}(0) U_{i, k}^\ast(0)\|_{L^2 \rightarrow L^2} \leq C 2^{- |j - k|}.$$ In light of the $L^2$-boundedness of spectral projection, it suffices to prove $$ \frac{d}{d\blambda}\bigg(\chi_\infty(\blambda) \psi\bigg(\frac{\blambda}{2^j}\bigg)Q_i^\ast(\blambda)\bigg) \frac{d}{d\bmu}\bigg(Q_i(\bmu)\psi\bigg(\frac{\bmu}{2^k}\bigg)\chi_\infty(\bmu)\bigg) \leq C 2^{- |j - k|}.$$ We denote the operators in the parentheses $Q^\ast_{i, j}(\blambda)$ and $Q_{i, k}(\bmu)$ respectively. We write the product of the two as \begin{eqnarray*}Q_{i, j}^\ast(\blambda) Q_{i, k}(\bmu)&=&  \blambda^{n + 1} \bmu^{n + 1} \int\!\!\!\int\!\!\!\int e^{i \blambda (z - z^{\prime\prime}) \cdot \zeta / x''} q_{i, j}(z^{\prime\prime}, \zeta, \blambda)\\&& \quad\quad\quad \times e^{i \bmu (z^{\prime\prime} - z^\prime) \cdot \zeta^\prime / x''} q_{i, k}(z^{\prime\prime}, \zeta^\prime, \bmu) \, d\zeta d\zeta^\prime dz^{\prime\prime},\end{eqnarray*} away from $\partial X$ or
\begin{eqnarray*}  \begin{gathered}Q_{i, j}^\ast(\blambda) Q_{i, k}(\bmu)  = \blambda^{n + 1} \bmu^{n + 1} \int\!\!\!\int\!\!\!\int e^{i \blambda ( (x - x'')  \lambda + (y - y'') \cdot \mu )/ x''} q_{i, j}(x'', y'', \lambda, \mu, \blambda) \\e^{i \bmu ((x'' - x^\prime)  \lambda' +  (y'' - y') \cdot \mu' ) / x''} q_{i, k}(x'', y'', \lambda', \mu', \bmu) \, d\lambda d\mu d\lambda' d\mu' dx'' dy'',\end{gathered}\end{eqnarray*} near $\partial X$. The second case is indeed the same with the first, as one can denote $(x, y)$ by $(z_1, \dots, z_n$ and $(\lambda, \mu)$ by $(\zeta_1, \dots, \zeta_n)$.
Furthermore, one may assume $j > k$, equivalent to $\blambda > \bmu$, due to the symmetry. We insert a differential operator $i x'' \zeta \cdot \partial_{z^{\prime \prime}} / (\blambda |\zeta|^2)$, to which $e^{i \blambda (z - z^{\prime\prime}) \cdot \zeta /x''}$ is invariant,  and take integration by parts. \begin{eqnarray*}
\lefteqn{\blambda^{- n - 1} \bmu^{- n - 1}  Q_{i, j}^\ast(\blambda) Q_{i, k}(\bmu)} \\&=& \int\!\!\!\int\!\!\!\int \frac{i x'' \zeta \cdot \partial_{z^{\prime \prime}}}{\blambda |\zeta|^2} \Big(e^{i \blambda (z - z^{\prime\prime}) \cdot \zeta / x''} \Big)  q_{i, j}(z^{\prime\prime}, \zeta, \blambda) e^{i \bmu (z^{\prime\prime} - z^\prime) \cdot \zeta^\prime / x''} q_{i, k}(z^{\prime\prime}, \zeta^\prime, \bmu) \, d\zeta d\zeta^\prime dz^{\prime\prime}  \\&=&  \frac{\bmu}{\blambda}  \int\!\!\!\int\!\!\!\int  e^{i \blambda (z - z^{\prime\prime}) \cdot \zeta / x''}  \frac{ \zeta \cdot \zeta^\prime}{ |\zeta|^2} e^{i \bmu (z^{\prime\prime} - z^\prime) \cdot \zeta^\prime / x''} q_{i, j}(z^{\prime\prime}, \zeta, \blambda)  q_{i, k}(z^{\prime\prime}, \zeta^\prime, \bmu) \, d\zeta d\zeta^\prime dz^{\prime\prime} \\&& -  \frac{x''}{\blambda}   \int\!\!\!\int\!\!\!\int i e^{i \blambda (z - z^{\prime\prime}) \cdot \zeta / x''} e^{i \bmu (z^{\prime\prime} - z^\prime) \cdot \zeta^\prime / x''}  \frac{ \zeta}{ |\zeta|^2}  \cdot \partial_{z^{\prime\prime}} \bigg(q_{i, j}(z^{\prime\prime}, \zeta, \blambda)  q_{i, k}(z^{\prime\prime}, \zeta^\prime, \bmu) \bigg)   \, d\zeta d\zeta^\prime dz^{\prime\prime}\end{eqnarray*} Because $i \neq 0$, $Q_i$ is microlocally supported around the spherical bundle, namely, $|\zeta| \approx |\zeta^\prime| \approx 1$. Therefore, using the $L^2$-boundedness of semiclassical pseudodifferential operators and noting $\blambda, \bmu \geq 1$ on the support of the high energy cut-off function $\chi_\infty$, we deduce $$\big\|Q_{i, j}^\ast(\blambda) Q_{i, k}(\bmu)\big\|_{L^2 \rightarrow L^2} \leq C \frac{\bmu + x''}{\blambda}  \leq C \frac{\bmu}{\blambda} \leq C 2^{- {|j - k|}},$$ which proves the almost orthogonality for $t = 0$. Almost orthogonality lemma then gives that $\sum_{|j| \leq l} U_{i, j}^\ast(0)$ strongly converges in $L^2$, that is, $$\lim_{l \rightarrow \infty} \sup_{m > l} \bigg\|\sum_{l \leq |j| \leq m} U_{i, j}^\ast(0) f\bigg\|^2_{L^2} = 0.$$

We now extend this conclusion to any $t$. Given $f \in L^2$, we want to have $$\lim_{l \rightarrow \infty} \sup_{m > l} \bigg\| \sum_{l \leq |j| \leq m} U_{i, j}^\ast(t) f \bigg\|_{L^2}^2 = \lim_{l \rightarrow \infty} \sup_{m > l}  \sum_{l \leq |j|, |j^\prime| \leq m} \langle U_{i, j}(t)U_{i, j^\prime}^\ast(t) f , f \rangle = 0.$$
In fact, it is easy to reduce the convergence for general $t$ to the case $t = 0$ by the time independence of the operator $U_{i, j}(t)U^\ast_{i, j}(t)$
\begin{eqnarray*} \lim_{l \rightarrow \infty} \sup_{m > l} \sum_{l \leq |j|, |j^\prime| \leq m} \langle  U_{i, j}(t)U_{i, j^\prime}^\ast(t) f , f \rangle = \lim_{l \rightarrow \infty} \sup_{m > l} \sum_{l \leq |j|, |j^\prime| \leq m} \langle  U_{i, j}(0)U_{i, j^\prime}^\ast(0) f , f \rangle = 0\end{eqnarray*} Finally, noting $$\|U_i^\ast(t)\|^2 \leq\lim_{l \rightarrow \infty}\bigg\|\sum_{|j| \leq l}  U_{i, j}^\ast(t)\bigg\|^2,$$ we conclude that $U_i(t)$ is uniformly bounded on $L^2$.

\end{proof}

\section{Dispersive estimates for Schr\"{o}dinger propagators I}\label{sec : dispersive1}

In this section we establish the dispersive estimates for diagonal microlocalized / truncated Schr\"{o}dinger propagators. We shall show \begin{proposition}[Dispersive estimates I]\label{dispersive estimate1} The long time dispersive estimates for the microlocalized Schr\"{o}dinger propagators at high energy \begin{eqnarray} \label{long time dispersive microlocalized} &&\begin{gathered} \bigg|  \int_0^\infty    e^{i t \blambda^2} \chi_\infty \Big(Q_k(\blambda) d E_{P}(\blambda) Q_k^\ast (\blambda)\Big) (z, z^\prime) \, d\blambda \bigg| \leq C |t|^{- \infty}  e^{- n d(z, z^\prime)/2} \end{gathered}\end{eqnarray} hold, provided $t > 1 + d(z, z^\prime)$. The low energy truncated propagator obeys  \begin{eqnarray}\label{short time low energy dispersive}\bigg|  \int_0^\infty    e^{i t \blambda^2} \chi_{\text{low}} d E_{P}(\blambda, z, z^\prime) \, d\blambda \bigg| \leq C |t|^{- 3/2} \big(1 + d(z, z^\prime)\big) e^{- n d(z, z^\prime)/2} \end{eqnarray} for all times. On the other hand,  we have short time dispersive estimates for the high energy truncated propagator microlocalized near the diagonal \begin{eqnarray}  \label{short time high energy dispersive} \begin{gathered} \bigg|  \int_0^\infty    e^{i t \blambda^2} \chi_\infty \Big(Q_k(\blambda) d E_{P}(\blambda) Q_k^\ast (\blambda)\Big) (z, z^\prime) \, d\blambda \bigg| \\ \quad\quad\quad \leq C |t|^{- (n + 1)/2}  (1 + d(z, z'))^{n/2} e^{- n d(z, z^\prime)/2} \end{gathered},\end{eqnarray} provided $t < 1 + d(z, z^\prime)$. \end{proposition}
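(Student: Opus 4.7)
My plan is to treat each of the three estimates as a stationary/non-stationary phase problem for the oscillatory integral representation of the propagator, using either the explicit low-energy form \eqref{low energy spectral measure} or the microlocalized high-energy estimate from Proposition \ref{derivative spectral measure estimate}. In all cases the relevant phase is $\varphi_\pm(\blambda) = t\blambda^2 \pm \blambda d(z,z^\prime)$, with $\varphi_\pm^\prime = 2t\blambda \pm d(z,z^\prime)$ and $\varphi_\pm^{\prime\prime} = 2t$. The exponential decay factor $e^{-nd(z,z^\prime)/2}$ will come in two different ways: at high energy from the long-distance branch of Proposition \ref{derivative spectral measure estimate}, and at low energy from the asymptotic $(\rho_L\rho_R)^{n/2} \asymp e^{-nd(z,z^\prime)/2}$ provided by \eqref{asymptotic of distance function}.

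For the long-time high-energy estimate \eqref{long time dispersive microlocalized}, I substitute the expression from Proposition \ref{derivative spectral measure estimate} and obtain two oscillatory integrals in $\blambda \geq 1$ with phases $\varphi_\pm$. Under the hypothesis $t > 1 + d(z,z^\prime)$ and $\blambda \geq 1$, one has $|\varphi_\pm^\prime(\blambda)| \geq 2t\blambda - d(z,z^\prime) \geq t$. I apply the integration-by-parts operator $L = (i\varphi_\pm^\prime)^{-1} \partial_\blambda$ an arbitrary number of times; the derivative estimates $\partial_\blambda^j a_\pm = O(\blambda^{-j} \cdot A(d(z,z^\prime)))$ in Proposition \ref{derivative spectral measure estimate} guarantee that each application produces a factor $O(t^{-1})$ without cost in the amplitude, and the $O(\blambda^{-\infty})$ remainder contributes a trivial term. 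Choosing the number of iterations large gives the $|t|^{-\infty}$ decay, while the amplitude bound provides either $e^{-nd(z,z^\prime)/2}$ (large $d$) or $1 \asymp e^{-nd(z,z^\prime)/2}$ (bounded $d$).

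For the short-time high-energy estimate \eqref{short time high energy dispersive}, I perform stationary phase on the two branches separately. The phase $\varphi_+$ has no critical point on $\blambda > 0$ and its derivative is bounded below by $\max(2t, d(z,z^\prime))$, so integration by parts, combined with the condition $t < 1 + d(z,z^\prime)$, yields a bound stronger than the one sought. The phase $\varphi_-$ has a critical point $\blambda^\ast = d(z,z^\prime)/(2t)$; in the regime $d(z,z^\prime) \geq 2t$ this sits in the support of $\chi_\infty$ and the standard stationary phase formula provides a factor $|t|^{-1/2}$ times the amplitude at $\blambda^\ast$. Using the short-distance branch of Proposition \ref{derivative spectral measure estimate}, the amplitude at $\blambda^\ast$ is $(\blambda^\ast)^n(1+\blambda^\ast d(z,z^\prime))^{-n/2} \asymp |t|^{-n/2}$ when $d(z,z^\prime)^2/t \gtrsim 1$, and from the long-distance branch it is $(\blambda^\ast)^{n/2} e^{-nd(z,z^\prime)/2} \asymp (d(z,z^\prime)/t)^{n/2} e^{-nd(z,z^\prime)/2}$, in each case giving precisely $|t|^{-(n+1)/2}(1 + d(z,z^\prime))^{n/2} e^{-nd(z,z^\prime)/2}$.

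For the low-energy estimate \eqref{short time low energy dispersive}, I insert \eqref{low energy spectral measure}, write $(\rho_L\rho_R)^{n/2 \pm i\blambda} = e^{\mp i\blambda d(z,z^\prime)} e^{-nd(z,z^\prime)/2} e^{b(z,z^\prime)(n/2 \pm i\blambda)}$ using \eqref{asymptotic of distance function} to factor out $e^{-nd(z,z^\prime)/2}$ and absorb $b$ into the amplitude $a$, which remains smooth and bounded on $\blambda \in [0,2)$ by hypothesis. The leading $\blambda$ factor in \eqref{low energy spectral measure} lets me perform one integration by parts against $e^{it\blambda^2}$ using $\blambda\,d\blambda = (2t)^{-1}\,i\partial_\blambda (e^{it\blambda^2})e^{-i\blambda d(z,z^\prime)} + \text{lower order}$, which gains $|t|^{-1}$; the surviving oscillatory integral has phase $\varphi_\pm$ and a bounded amplitude compactly supported in $\blambda$, to which I apply van der Corput's lemma to obtain another $|t|^{-1/2}$. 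The $(1 + d(z,z^\prime))$ factor is produced by the differentiation $\partial_\blambda e^{\mp i\blambda d(z,z^\prime)}$ that occurs when the integration by parts lands on the amplitude rather than on the quadratic phase.

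The main obstacle I anticipate is the short-time high-energy case \eqref{short time high energy dispersive}: one must stitch together the two branches of the amplitude bound in Proposition \ref{derivative spectral measure estimate} (short versus long $d(z,z^\prime)$) so that the stationary phase contribution produces the precise prefactor $(1+d(z,z^\prime))^{n/2}$ uniformly, and one must also handle the regime $d(z,z^\prime) < 2t$ where $\blambda^\ast$ falls outside the support of $\chi_\infty$ and stationary phase degenerates to integration by parts. The other delicate point is the low-energy case, where the amplitude $a(\blambda,z,z^\prime)$ in \eqref{low energy spectral measure} need not be smooth in $(z,z^\prime)$ away from Cartan--Hadamard geometry, so every bound must be derived purely from \eqref{asymptotic of distance function} and the regularity in $\blambda$.
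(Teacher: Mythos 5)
Your overall plan — use \eqref{low energy spectral measure} and Proposition \ref{derivative spectral measure estimate} to reduce to oscillatory integrals with phase $t\blambda^2 \pm \blambda d(z,z')$ and extract decay from the phase derivative — is the same starting point as the paper, but the way you propose to execute two of the three estimates is genuinely different, and one of the two has a gap.

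For \eqref{long time dispersive microlocalized} your argument is essentially the paper's: for $\blambda\ge 1$ and $t>1+d$ one indeed has $|\partial_\blambda(t\blambda^2\mp\blambda d)|\ge t\blambda$, and repeated integration by parts with the symbol-type bounds $\partial_\blambda^j a_\pm = O(\blambda^{-j}\cdot A)$ yields $O(t^{-N})$ for every $N$ together with either $e^{-nd/2}$ or $1\asymp e^{-nd/2}$; the paper packages the $\blambda$-integral with a dyadic decomposition but the content is the same.

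For \eqref{short time low energy dispersive} you take a cleaner route than the paper. You factor $(\rho_L\rho_R)^{\pm i\blambda}=e^{\mp i\blambda d}e^{\pm i\blambda b}$ (with $b$ bounded, so $e^{\pm i\blambda b}$ is absorbed into a smooth amplitude with bounded $\blambda$-derivatives), do one integration by parts against $\blambda e^{it\blambda^2}=(2it)^{-1}\partial_\blambda e^{it\blambda^2}$ — the $\blambda=0$ boundary terms cancel between the two branches — and then apply van der Corput with $\varphi''=2t$. The branch where $\partial_\blambda$ lands on $e^{\mp i\blambda d}$ produces the factor $d$; the other produces an $O(1)$ amplitude; each contributes $t^{-1}\cdot t^{-1/2}$. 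This gives $|t|^{-3/2}(1+d)e^{-nd/2}$ directly, with no need for the paper's rescaling, $\blambda\lessgtr 1$ split, and its case analysis $t^{1/2}\gtrless 1+d$. This is a legitimate simplification, provided you state the van der Corput lemma with the TV-norm of the amplitude rather than just its sup norm.

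For \eqref{short time high energy dispersive} there is a real gap. You propose genuine stationary phase at $\blambda^\ast=d/(2t)$ and evaluate the amplitude there, claiming the error terms are controlled. But the amplitude is only of symbol class with $|\partial_\blambda^j a_\pm|\lesssim \blambda^{-j}\cdot A$, so the $j$-th correction in the stationary phase expansion is of relative size $\bigl(t(\blambda^\ast)^2\bigr)^{-j}=(4t/d^2)^j$, and the expansion is only asymptotic when $d^2/t\gg 1$. The regime $d\ge 2t$ but $d^2/t\lesssim 1$ (which forces $t<1$ and $d<1$, so $\blambda^\ast\ge 1$ is still in the support of $\chi_\infty$) is not covered. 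In this regime the Gaussian width $(2t)^{-1/2}$ exceeds $\blambda^\ast$ and the stationary-phase "main term plus error" picture breaks down; one needs a direct bound over an $O(1)$-sized $\blambda$-window around $\blambda^\ast$. That is precisely what the paper's decomposition into $\psi_k(\blambda)=\phi(2^{-k}|2\blambda-t^{-1/2}d|)$ accomplishes: the central block $T_{0,-}$ is estimated directly by $\frac{t^{n/4}}{(1+d)^{n/2}\blambda^{n/2}}\blambda^n\le C$ on a region of measure $O(1)$, while the $k>0$ blocks are handled by integration by parts with gain $2^{-kN}$. You anticipate the difficulty of stitching the two amplitude branches and the case $\blambda^\ast<1$, but you do not address this failure of the expansion when $d^2/t$ is bounded, which is the substantive obstacle the dyadic decomposition is designed to circumvent. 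A correct version of your argument would have to replace stationary phase near $\blambda^\ast$ with the paper's direct $O(1)$-window estimate.
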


\begin{remark}If we work on a manifold without conjugate points, this result will reduce to the dispersive estimates (\ref{hyperbolic dispersive}) on hyperbolic space, where the microlocalization is needless. Moreover, for short time estimates, say $t < 1 + d(z, z')$, we can combine \eqref{short time low energy dispersive} and \eqref{short time high energy dispersive}.\end{remark}

\begin{proof}[Proof of \eqref{long time dispersive microlocalized}]

Let us look at the long time dispersion first. Because we want to use stationary phase estimates, we have to split the amplitude of the microlocalized propagator into  functions compactly supported in $\blambda$. To do so, we select a bump function $\phi \in C_c^\infty[1/2, 2]$ such that $\sum_j \phi(2^{-j} \blambda) = 1$ and let $\phi_0(\blambda) = \sum_{j \leq 0} \phi(2^{-j} \blambda)$. Then the Schr\"{o}dinger propagator is decomposed as $I_0 + \sum_{j > 0} I_j$, which is  \begin{eqnarray*}I_0 & = & \int_0^\infty  e^{it\blambda^2} \chi_\infty(\blambda) Q_k(\blambda) dE_{P}(\blambda) Q_k^\ast(\blambda) \phi_0(\blambda) \, d\blambda\\ I_j &=& \int_0^\infty e^{it\blambda^2} \chi_\infty(\blambda) Q_k(\blambda) dE_{P}(\blambda) Q_k^\ast(\blambda)  \phi(2^{-j}\blambda) \, d\blambda. \end{eqnarray*}

$\bullet \ $ Case 1: $d(z, z') \leq 1$

As $t$ goes to infinity, the phase function is $\blambda^2$ which is clearly non-degenerate at the stationary point $\blambda = 0$.

Noting $0$ is not on the support of $\chi_\infty$, we have $I_0 = O(t^{-\infty})e^{-n d(z, z') / 2}$. On the other hand, noting the phase function of the $I_j$ terms are non-stationary, we deduce \begin{eqnarray*}\sum_{j > 0} |I_j| &=&  \sum_{j > 0}\bigg| \int_0^\infty \bigg(\frac{1}{2it\blambda}\frac{d}{d\blambda}\bigg)^N (e^{it\blambda^2}) dE_{P}(\blambda) \phi(2^{-j}\blambda) \, d\blambda \bigg|\\&\leq& C \sum_{j > 0} t^{-N}  e^{- nd(z, z')/2} \int_{2^{j - 1}}^{2^{j + 1}} \blambda^{n - 2N}\, d\blambda \leq C t^{- N} e^{- nd(z, z')/2}. \end{eqnarray*} Let $N$ go to $\infty$ to finish the proof of this case.

$\bullet \ $ Case 2: $d(z, z') \geq 1$

Since $d(z, z')$ goes to $\infty$ as well as $t$, the phase function consists of not only $\blambda^2$ but also some other term coming from the spectral measure. The outgoing and incoming parts of the spectral measure contribute the oscillatory terms $e^{- i \blambda d(z, z')}$ and $e^{ i \blambda d(z, z')}$ respectively. So the new phase function will be $t \blambda^2 \mp \blambda d(z, z')$. In the incoming case, such phase function isn't stationary. Then we can select the bump function $\phi$ as above and get compactly supported amplitudes. Noting the support of $\phi_0$ isn't intersected with $\chi_\infty$, we can obtain the dispersive estimates by running the same argument of non-stationary phase and integration by parts \begin{eqnarray*}\sum_{j > 0} |I_j| &=&  \sum_{j > 0} \bigg|\int_0^\infty \bigg(\frac{1}{2it\blambda + i d(z, z')}\frac{d}{d\blambda}\bigg)^N (e^{it\blambda^2 + i d(z, z') \blambda}) a_+(\blambda) \phi(2^{-j}\blambda) \, d\blambda\bigg| \\&\leq& C \sum_{j > 0} t^{-N} \int_{2^{j - 1}}^{2^{j + 1}} \blambda^{n  - 2N} e^{- nd(z, z')/2} \, d\blambda \leq C t^{- N} e^{- nd(z, z')/2}, \end{eqnarray*}for any large $N$.

 On the other hand, the phase function $t \blambda^2 - \blambda d(z, z')$ is stationary at $\blambda = d(z, z')/(2t)$. Nonetheless $d(z, z')/(2t) < 1$ doesn't lie on the support of $\chi_\infty$ either, we thus can prove the dispersive estimates by the same argument. The proof is now complete.

  \end{proof}

 \begin{proof}[Proof of \eqref{short time low energy dispersive}]

 It can be deduced from the results of the spectral measure at low energy. We make a change of variable and get
 \begin{eqnarray*}U_{\text{low}} &=& \int_0^\infty    e^{i t \blambda^2} \chi_{\text{low}}(\blambda) d E_{P}(\blambda, z, z^\prime) \, d\blambda  \\ & = &  t^{-1/2} \int_0^\infty    e^{i \blambda^2} \chi_{\text{low}}(t^{-1/2} \blambda) d E_{P}(t^{-1/2} \blambda, z, z^\prime) \, d\blambda.\end{eqnarray*} We decompose the LHS as $I_0 + I_\infty$, where\begin{eqnarray*}
I_0 &=& t^{-1/2} \int_0^1    e^{i \blambda^2} \chi_{\text{low}}(t^{-1/2} \blambda) d E_{P}(t^{-1/2} \blambda, z, z^\prime) \, d\blambda \\ I_\infty &=& t^{-1/2} \int_1^\infty    e^{i \blambda^2} \chi_{\text{low}}(t^{-1/2} \blambda) d E_{P}(t^{-1/2} \blambda, z, z^\prime) \, d\blambda.
\end{eqnarray*}

 We use \eqref{low spectral measure upper bound} for low energies to estimate $I_0$ as follows \begin{eqnarray*}
|I_0| &=& t^{-1/2}\bigg| \int_0^1    e^{i \blambda^2} \chi_{\text{low}}(t^{-1/2} \blambda) d E_{P}(t^{-1/2} \blambda, z, z^\prime) \, d\blambda \bigg|\\  &\leq& t^{-1/2} \int_0^1  (t^{-1/2} \blambda)^2 (1 + d(z, z')) e^{-n d(z, z')/2}\, d\blambda\\  &\leq& C t^{-3/2}  (1 + d(z, z')) e^{-n d(z, z')/2}.
\end{eqnarray*}

On the other hand,  we shall invoke \eqref{low energy spectral measure} for low energies and perform integration by parts on $I_\infty$.

$\bullet \ $ Case 1:  $t^{1/2} > 1 + d(z, z')$

 We perform integration by parts and deduce that
\begin{eqnarray*} I_\infty &=& t^{-1/2} \int_1^\infty  \bigg(\frac{1}{2i\blambda}\frac{d}{d \blambda}\bigg)^3  \big(e^{i \blambda^2}\big) \chi_{\text{low}}(t^{-1/2} \blambda) d E_{P}(t^{-1/2} \blambda, z, z^\prime) \, d\blambda\\&=& \frac{t^{-1}}{-8i} \int_1^\infty  \bigg(\frac{1}{\blambda}\frac{d}{d \blambda}\bigg)^3  \big(e^{i \blambda^2}\big) \chi_{\text{low}}(t^{-1/2} \blambda)\blambda (\rho_L\rho_R)^{n/2}\\ &&\times\bigg((\rho_L\rho_R)^{it^{-1/2}\blambda} a(t^{-1/2} \blambda) - (\rho_L\rho_R)^{- it^{-1/2}\blambda} a(- t^{-1/2} \blambda)\bigg)\, d\blambda\\
 &=& I_{\infty, 1} + I_{\infty, 2},\end{eqnarray*}
where we write \begin{eqnarray*}I_{\infty, 1} &=& \frac{t^{-1}(\rho_L\rho_R)^{n/2}}{8i}  \bigg(\frac{1}{\blambda}\frac{d}{d \blambda}\bigg)^2 \big(e^{i \blambda^2}\big) \\&& \times\bigg((\rho_L\rho_R)^{it^{-1/2}\blambda} a(t^{-1/2} \blambda) - (\rho_L\rho_R)^{- it^{-1/2}\blambda} a(- t^{-1/2} \blambda)\bigg)\bigg|_{\blambda=1}\\
 I_{\infty, 2} &=& \frac{t^{-3/2}(\rho_L\rho_R)^{n/2}}{8i} \int_1^\infty  \bigg( \frac{1}{\blambda}\frac{d}{d \blambda}\bigg)^2 \big(e^{i \blambda^2}\big)  \\&&\times\bigg(  (\rho_L\rho_R)^{ it^{-1/2}\blambda} a'( t^{-1/2} \blambda)+  (\rho_L\rho_R)^{- it^{-1/2}\blambda} a'(- t^{-1/2} \blambda)
 \\&&
+ i (\rho_L\rho_R)^{it^{-1/2}\blambda}\ln(\rho_L\rho_R) a(t^{-1/2} \blambda)   + i (\rho_L\rho_R)^{- it^{-1/2}\blambda}\ln(\rho_L\rho_R) a(- t^{-1/2} \blambda)
 \bigg) \, d\blambda ,\end{eqnarray*} with a smooth function $a$ supported on $[0, 1]$.
 We now use \eqref{asymptotic of distance function} to estimate $I_{\infty, 1}$. If $t < M$ provided $M$ is sufficiently large,
$$|I_{\infty, 1}| \leq t^{-1} e^{-nd(z, z')/2} \leq C M^{1/2} t^{-3/2}  e^{-nd(z, z')/2}.$$ On the other hand, if $t > M$ (i.e. $t^{-1/2}$ is very small), we then use the smoothness of $a$ at $0$ and obtain $$\bigg((\rho_L\rho_R)^{it^{-1/2}} a(t^{-1/2} ) - (\rho_L\rho_R)^{- it^{-1/2}} a(- t^{-1/2} )\bigg) \leq Ct^{-1/2}.$$ Consequently, we obtain that $$|I_{\infty, 1} | \leq C t^{-3/2}  e^{-nd(z, z')/2}.$$
For $I_{\infty, 2}$, by \eqref{asymptotic of distance function}, we observe the part of the integrand contained in the parentheses is bounded by $C (1 + d(z, z'))$.  We take integration by parts two more times and get \begin{eqnarray*}I_{\infty, 2} &\leq& C t^{-3/2} (1 + d(z, z')) e^{-nd(z, z')/2}\\&&\bigg(\int_1^\infty \blambda^{-4} \,d\blambda + t^{-1/2}(1 + d(z, z'))\int_1^\infty \blambda^{-3} \,d\blambda + t^{- 1} ( 1 + d(z, z')^2) \int_1^{\infty} \blambda^{-2}  \,d\blambda\bigg),\end{eqnarray*}  Noting $1 + d(z, z') \leq t^{1/2}$, we conclude that \begin{eqnarray*} I_{\infty, 2} \leq C t^{-3/2} (1 + d(z, z')) e^{-nd(z, z')/2}.\end{eqnarray*} 

$\bullet \ $ Case 2:  $1<  t^{1/2} < 1 + d(z, z')$, 

We shall estimates following integrals instead \begin{eqnarray*}I_{\infty, +} &=& t^{-1/2} e^{- n\tilde{d}}\int_1^\infty e^{i\blambda^2 + i\tilde{d} t^{-1/2}\blambda} \blambda a(t^{-1/2}\blambda) \, d\blambda\\ I_{\infty, -} &=& t^{-1/2}e^{- n\tilde{d}}\int_1^\infty e^{i\blambda^2 - i\tilde{d}t^{-1/2}\blambda} \blambda a(- t^{-1/2}\blambda)  \, d\blambda,\end{eqnarray*} where $\tilde{d} = \ln(\rho_L \rho_R)$ and $ a \in C_c^\infty [0,1]$. Without loss of generality, we assume $\tilde{d} \geq 0$. By \eqref{asymptotic of distance function}, $\tilde{d}$ is an approximation of the geodesic distance function. The term $I_{\infty, +}$ is easier, since the first derivative of the phase $2\blambda + \tilde{d}t^{-1/2}$ is not vanishing. So we directly adopt the standard integration by parts argument as follows. First, we insert an invariant operator \begin{eqnarray*}I_{\infty, +} \leq t^{-1} e^{- n\tilde{d}}\bigg|\int_1^\infty \frac{1}{2\blambda + \tilde{d} t^{-1/2}}\frac{\partial}{\partial \blambda} e^{i\blambda^2 + i\tilde{d} t^{-1/2}\blambda} \cdot \blambda a(t^{-1/2}\blambda) \, d\blambda \bigg|.\end{eqnarray*} Then we perform integration by parts on the integral and  get \begin{eqnarray*} \frac{a(t^{-1/2}) e^{i + i t^{-1/2} \tilde{d}}}{2 + t^{-1/2} \tilde{d}} + \int_1^\infty  e^{i\blambda^2 + i\tilde{d} t^{-1/2}\blambda}  \bigg(\frac{\tilde{d} t^{-1/2}a(t^{-1/2}\blambda)  }{(2\blambda + \tilde{d}t^{-1/2})^2} + \frac{\blambda t^{-1/2}a'(t^{-1/2}\blambda)  }{2\blambda + \tilde{d}t^{-1/2}}\bigg) \, d\blambda \end{eqnarray*} The boundary term and the first term of the integral is bounded by a constant, whilst the second term is yielded to $$C \int_1^{t^{1/2}} \frac{\blambda t^{-1/2}}{2\blambda + t^{-1/2} \tilde{d}} \,d\blambda.$$ Also noting $t^{-1/2}\tilde{d} > C$, we conclude that $$I_{\infty, +} \leq C t^{-3/2} e^{- n d(z, z')} (1 + d(z, z')).$$

We now estimate $I_{\infty, -}.$ Since $2\blambda - t^{-1/2} \tilde{d} \blambda$ might vanish, we have to take a dyadic decomposition. To do so, we introduce a partition of unity $\sum_j \phi(2^{-j} \blambda) = 1$ with $\phi \in C_c^\infty[1/2, 2]$. We further denote $$\psi_k(\blambda) = \phi\big( 2^{-k} |2\blambda - t^{-1/2} \tilde{d}(z, z')|\big).$$ Then we have to estimate following integrals \begin{eqnarray*} I_{\infty, -}^0 & = & t^{-1} e^{- n\tilde{d}} \int_1^\infty e^{i\blambda^2 - i t^{-1/2} \tilde{d} \blambda} \blambda a(t^{-1/2}\blambda) \sum_{k \leq 0} \psi_k(\blambda)  \,d\blambda,\\  I_{\infty, -}^k & = & t^{-1} e^{- n\tilde{d}} \int_1^\infty e^{i\blambda^2 - i t^{-1/2} \tilde{d} \blambda} \blambda a(t^{-1/2}\blambda)  \psi_k(\blambda)  \,d\blambda, \quad \quad\quad k > 0.\end{eqnarray*} We consider $I_{\infty, -}^0$ first. One can find a sufficiently large number $M$ such that for all $\blambda > M$ we have $\blambda \sim t^{-1/2} \tilde{d}$ if $|2\blambda - t^{-1/2} \tilde{d}| \leq 2$. Since the measure of the support of $\sum_{k \leq 0} \psi_k(\blambda)$ is smaller than $4$, we thus get $$\int_1^M e^{i\blambda^2 - i t^{-1/2} \tilde{d} \blambda} \blambda a(t^{-1/2}\blambda) \sum_{k \leq 0} \psi_k(\blambda)  \,d\blambda \leq C \leq C t^{-1/2} (1 + d(z, z')).$$ In the meantime, we have \begin{eqnarray*}\lefteqn{\int_M^\infty e^{i\blambda^2 - i t^{-1/2} \tilde{d} \blambda} \blambda a(t^{-1/2}\blambda) \sum_{k \leq 0} \psi_k(\blambda)  \,d\blambda}\\& \leq& t^{-1/2} \tilde{d}  \int_M^\infty e^{i\blambda^2 - i t^{-1/2} \tilde{d} \blambda} \frac{\blambda a(t^{-1/2}\blambda)}{t^{-1/2} \tilde{d}} \sum_{k \leq 0} \psi_k(\blambda)  \,d\blambda\\ &\leq& C t^{-1/2} (1 + d(z, z')).\end{eqnarray*} On the other hand, we again use integration by parts $N$ times on $I_{\infty, -}^k$ for $k > 0$. \begin{eqnarray*}\lefteqn{\sum_{k > 0}\bigg|\int_1^\infty e^{i\blambda^2 - i t^{-1/2} \tilde{d} \blambda} \blambda a(t^{-1/2}\blambda) \psi_k(\blambda) \, d\blambda\bigg|} \\&\leq& \sum_{k > 0} \bigg|\int_1^\infty \bigg(\frac{1}{2i\blambda - i t^{-1/2} \tilde{d}}\frac{\partial}{\partial \blambda}\bigg)^N e^{i\blambda^2 - i t^{-1/2} \tilde{d} \blambda} \blambda a(t^{-1/2}\blambda) \psi_k(\blambda) \, d\blambda\bigg|\\& \leq & C t^{-1/2} \tilde{d} \sum_{k > 0} 2^{-kN} \int_{|2\blambda - t^{-1/2} \tilde{d}| \sim 2^k} \blambda^{1 - N}\, d\blambda \\&\leq& C t^{-1/2} (1 + d(z, z')).\end{eqnarray*} Plugging  these estimates into $I_{\infty, -}^0$ and $I_{\infty, -}^k$ respectively, we conclude $$I_{\infty, -} \leq C t^{-3/2} (1 + d(z, z')) e^{-nd(z, z')/2}.$$ The proof is now complete.
\end{proof}

 \begin{proof}[Proof of \eqref{short time high energy dispersive}] Because of the distinction between the long and short distance, we discuss the two cases separately. In particular, the exponential decay is negligible in case of short distance,  as $e^{-n d(z, z')/2}$ is bounded from below.  The proof of \eqref{short time high energy dispersive} in case of small distance is the same with the proof of the dispersive estimates on asymptotically conic manifolds by Hassell and Zhang \cite{Hassell-Zhang}, as the spectral measure for small $d(z, z')$ and large $\blambda$ obeys the same estimates as on asymptotically conic manifolds. In fact, the idea for both long distance and short distance is to perform an appropriate dyadic decomposition over the value of the derivative of the phase function for an integration by parts argument. We only give the proof for long distance to see the more interesting exponential decay in $d(z, z')$.

First of all,  we rescale the microlocalized high energy truncated propagator $U_k$ as follows \begin{eqnarray*}U_k &=& \int_0^\infty e^{i t \blambda^2} \chi_\infty \Big(Q_k(\blambda) d E_{P}(\blambda) Q_k^\ast(\blambda)\Big) (z, z^\prime) \, d\blambda\\ &=&  t^{-1/2} \int_0^\infty e^{i  \blambda^2 } \chi_\infty(t^{-1/2} \blambda)  \Big(Q_k d E_{P} Q_k^\ast\Big) (t^{-1/2} \blambda, z, z^\prime)  \, d\blambda, \end{eqnarray*} provided $t < 1 + d(z, z')$. Applying Proposition \ref{derivative spectral measure estimate} for high energies, we write \begin{equation}\label{T+ and T-}U_k =t^{-(n + 1)/2}  T_{+} + t^{-(n + 1)/2} T_{-},\end{equation}  where\begin{eqnarray*} T_{+} &=&  \int_0^\infty e^{i  (\blambda^2 + t^{-1/2} \blambda d(z, z'))} \blambda^n a_+(t^{-1/2} \blambda, z, z^\prime)  \, d\blambda \\  T_{-} &=&   \int_0^\infty e^{i  (\blambda^2 - t^{-1/2} \blambda d(z, z')) } \blambda^n a_- (t^{-1/2} \blambda, z, z^\prime)   \, d\blambda \quad \end{eqnarray*} with smooth function $a_{\pm}(\blambda, z, z^\prime)$ on $[1, \infty) \times X^2_0$ obeying  \begin{eqnarray*}  \bigg|\frac{d^j}{d\blambda^j} a_{\pm}(t^{-1/2} \blambda, z, z^\prime) \bigg|=  O\Big(t^{n/4}\blambda^{- n/2 - j} e^{- n d(z, z^\prime) / 2}\Big) & \mbox{if $d(z, z^\prime)$ is large}.\end{eqnarray*} Now it suffices to prove both $T_+$ and $T_-$ are bounded by $(1 + d(z, z'))^{n/2} e^{- nd(z, z')/2}.$

We decompose the $T_+$ term further into $\sum_{j \geq 0} T_{j, +}$, where
\begin{eqnarray*} T_{j, +}& = &  \int_0^\infty e^{i  (\blambda^2 + t^{-1/2} \blambda d(z, z'))} \blambda^n a_+(t^{-1/2} \blambda, z, z^\prime) \phi(2^{- j}  \blambda) \, d\blambda \quad \mbox{for $j > 0$} \\ T_{0, +} &=  & \int_0^\infty e^{i  (\blambda^2 + t^{-1/2} \blambda d(z, z'))} \blambda^n a_+(t^{-1/2} \blambda, z, z^\prime) (1 - \sum_{j > 0}\phi(2^{- j}  \blambda)) \, d\blambda ,\end{eqnarray*} where  we denote, by a partition of unity $\sum_j \phi(2^{- j} \blambda) = 1$ with $\phi \in C_c^\infty [1/2, 2]$. It is clear that $T_{0, +}$ is bounded by $(1 + d(z, z'))^{n/2} e^{- nd(z, z')/2}$ after a quick application of Proposition \ref{derivative spectral measure estimate}.
On the other hand, for each $T_{j, +}$, the phase function of this oscillatory integral is actually non-stationary. One thus can insert a  differential operator $N$ times leaving the exponential term invariant and take integration by parts \begin{eqnarray*} |T_{j, +}| &=& \bigg|\int_0^\infty \bigg(\frac{1}{i(2\blambda + t^{-1/2} d(z, z^\prime))} \frac{\partial}{\partial \blambda} \bigg)^N e^{i(\blambda^2 + t^{-1/2} \blambda d(z, z^\prime))}   \blambda^n a_+(t^{-1/2} \blambda, z, z^\prime) \phi(2^{- j} \blambda)\, d\blambda\bigg|\\ &\leq&  C \int_{|\blambda| \sim 2^j} t^{n/4 } e^{-n d(z, z')/2}\blambda^{n/2 - 2N} \, d\blambda \\ &\leq&  C (1 + d(z, z'))^{n/2} e^{- n d(z, z')/2} \int_{|\blambda| \sim 2^j} \blambda^{n/2 - 2N} \, d\blambda .\end{eqnarray*} The sum of $T_{j, +}$s  in $j$ is clearly convergent if we make $N$ sufficiently large.

For the term $T_-$, the phase function may be stationary, we have to make a subtler decomposition. One may rewrite the integral as $ T_- = \sum_{k \geq 0} T_{k, -},$where \begin{eqnarray*}T_{0, -} &=&  \int_0^\infty e^{i \blambda^2 - i  t^{-1/2} \blambda d(z, z')}  \blambda^n
a_-(t^{-1/2} \blambda, z, z^\prime) \sum_{k \leq 0} \psi_k(\blambda)  \, d\blambda\\ T_{k, -} &=&  \int_0^\infty e^{i \blambda^2 - i t^{-1/2} \blambda d(z, z')}  \blambda^n a_-(t^{-1/2} \blambda, z, z^\prime) \psi_k(\blambda)  \, d\blambda, \quad \mbox{$k > 0$} \\ \psi_k(\blambda) &=& \phi\Big(2^{- k} \big|2 \blambda - t^{-1/2} d(z, z^\prime)\big|\Big) .\end{eqnarray*}

If we plug the estimates for $a_-$ in $T_{0, -}$, we will have $T_{0, -}$ bounded by $$(1 + d(z, z'))^{n/2}  e^{- n d(z, z^\prime)/2}  \int_0^\infty  \frac{t^{n/4}}{(1 + d(z, z'))^{n/2}\blambda^{n/2} } \blambda^n \sum_{k \leq 0} \psi_k(\blambda)   \, d\blambda.$$ The latter integral is convergent. In fact, if $t^{-1/2} d(z, z^\prime) $ is bounded, $\blambda$ will also be bounded, because of $$\text{supp} \, \bigg(\sum_{k \leq 0}\psi_k\bigg) = \{|2\blambda - t^{-1/2} d(z, z^\prime)| \leq 2\}.$$ Therefore, the conditions of large distance (large $d(z, z')$), short time (small $t$), and high energy (large $\blambda$), make the fraction in the integrand also bounded on the domain. So the $\blambda$-integration is convergent. If $t^{-1/2} d(z, z^\prime)$ is large, the restriction $|2\blambda - t^{-1/2} d(z, z^\prime)| \leq 2$ from the support of $\sum_{k \leq 0} \psi_k$ implies $\blambda \sim t^{-1/2} d(z, z^\prime)$.  Consequently,  for any value of $t^{-1/2} d(z, z')$, we have $$\frac{t^{n/4}}{(1 + d(z, z'))^{n/2}\blambda^{n/2} } \blambda^n \leq C $$  Then the integral $T_{0, -}$ is bounded by $$C \int_{\{2\blambda - t^{-1/2} d(z, z^\prime)  < 2\}}\sum_{k < 0}\psi_k \, d\blambda \leq C.$$

For the $T_{k, -}$ terms, since $|2\blambda - t^{-1/2} d(z, z^\prime)| > 1$, namely the phase function is non-stationary, we can employ the integration by parts argument. We denote $$ L_- = \frac{1}{2\blambda - t^{-1/2} d(z, z^\prime)} \frac{d}{d\blambda}$$ and get following estimates for $\sum_{k > 0} T_{k, -}$ \begin{eqnarray*} \lefteqn{ \sum_{k > 0} \bigg|\int_0^\infty  e^{i (\blambda^2 -  t^{-1/2} d(z, z^\prime)\blambda )} \blambda^n  a_-\big(t^{-1/2} \blambda, z, z^\prime\big)  \psi_k(\blambda)   \, d\blambda \bigg|}\\ &=&   \sum_{k > 0} \bigg|\int_0^\infty L_-^N \Big(e^{i (\blambda^2 -  t^{-1/2} d(z, z^\prime)\blambda )}\Big) \blambda^n a_-\big(t^{-1/2} \blambda, z, z^\prime\big)  \psi_k(\blambda) \, d\blambda \bigg| \\ &\leq& C   e^{- n d(z, z^\prime)/2}\sum_{k > 0}  2^{- k N} \int_{|2\blambda - t^{-1/2}d(z, z^\prime)| \sim 2^k} \blambda^{n/2 - N} t^{n/4} \, d\blambda \\ &\leq& C  (1 + d(z, z'))^{n/2} e^{- n d(z, z^\prime)/2},\end{eqnarray*} provided $N$ is large enough.

\end{proof}

\begin{remark}A key point of this integration by parts argument in this proof is that we can have a non-stationary phase function in the oscillatory integral. To get the dispersive estimates for the propagator $U_i(t)U_j^\ast(s)$, we will get a non-stationary phase function and run this argument again. \end{remark}

\section{Dispersive estimates for Schr\"{o}dinger propagators II}\label{sec : dispersive2}

Because we will have to establish the retarded estimates for the Strichartz estimates, the off-diagonal microlocalized spectral measure $Q_j dE_{P} Q_k^\ast$ will confront us. Before stating off-diagonal microlocalized dispersive estimates, we have to define all relations of the microlocalization pairs $(Q_j, Q_k^\ast)$.

This was discussed by Guillarmou and Hassell \cite{Guillarmou-Hassell} for Sobolev estimates, which is closely related to Strichartz estimates. Let us review their notions of outgoing / incoming relations. Suppose $g^t$ is the geodesic/bicharacteristic flow and $Q, Q^\prime$ are two semiclassical pseudodifferential operators of semiclassical order $0$ and differential order $- \infty$. We say $Q$ is not outgoing related to $Q^\prime$ if the forward flowout $g^t(\text{WF}_h(Q^\prime))$ with $t \geq 0$ doesn't meet $\text{WF}_h(Q)$, whilst $Q$ is not incoming related to $Q^\prime$ if the backward flowout $g^t(\text{WF}_h(Q^\prime))$ with $t \leq 0$ doesn't meet $\text{WF}_h(Q)$. It is useful to note $Q$ not incoming related to $Q^\prime$ is equivalent to $Q^\prime$ not outgoing related to $Q^\prime$.

\begin{proposition}[Dispersive estimates II]\label{dispersive estimate2}There is a refined pseudodifferential operator partition of unity $Id = \sum_{k = 0}^N Q_k$ such that \begin{eqnarray} \label{dispersive2 long} \begin{gathered}\bigg| \int_0^\infty e^{i (t - s) \blambda^2} \chi_\infty \Big(Q_j(\blambda) d E_{P}(\blambda) Q_k^\ast (\blambda)\Big) (z, z^\prime) \, d\blambda \bigg|\\  \quad  \leq C |t - s|^{- \infty}  e^{- n d(z, z^\prime)/2}\end{gathered} && \quad \mbox{for $|t - s| > 1 + d(z, z')$}\\ \label{dispersive2 short} \begin{gathered}\bigg| \int_0^\infty e^{i (t - s) \blambda^2} \chi_\infty \Big(Q_j(\blambda) d E_{P}(\blambda) Q_k^\ast (\blambda)\Big) (z, z^\prime) \, d\blambda \bigg| \\  \quad  \leq C |t - s|^{- (n + 1)/2} (1 + d(z, z'))^{n/2} e^{- n d(z, z^\prime)/2}\end{gathered}&&\quad \mbox{for $|t - s| < 1 + d(z, z')$},\end{eqnarray} hold for all $t \neq s $ if $\text{WF}_h(Q_j) \cap \text{WF}_h(Q_k) \neq \emptyset$, for $t < s$ if $Q_j$ is not outgoing related to $Q_k$, for $s < t$ if $Q_j$ is not incoming related to $Q_k$. \end{proposition}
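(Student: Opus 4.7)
The plan parallels the proof of Proposition \ref{dispersive estimate1}, with the key new ingredient being a decomposition of the off-diagonal composition $Q_j(\blambda)\, dE_P(\blambda)\, Q_k^*(\blambda)$ that converts the wavefront hypotheses on $(Q_j, Q_k)$ into the vanishing of one of the two Lagrangian branches of the spectral measure. First I would refine, if necessary, the partition of unity from Proposition \ref{derivative spectral measure estimate} so that each microsupport $\text{WF}_h(Q_k)$ is sufficiently small, and then apply the semiclassical Fourier integral operator calculus developed in \cite{Chen-Hassell1, Chen-Hassell2} to obtain, for large $\blambda$,
\begin{equation*}
Q_j(\blambda)\, dE_P(\blambda)\, Q_k^*(\blambda) = \blambda^n e^{i\blambda d(z,z')} a_+^{jk}(\blambda) + \blambda^n e^{-i\blambda d(z,z')} a_-^{jk}(\blambda) + O(\blambda^{-\infty}),
\end{equation*}
where $a_+^{jk}$ (resp.\ $a_-^{jk}$) is supported on the portion of the forward (resp.\ backward) bicharacteristic flow-out joining $\text{WF}_h(Q_k)$ to $\text{WF}_h(Q_j)$, and satisfies the same derivative estimates as $a_\pm$ in Proposition \ref{derivative spectral measure estimate}. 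The crucial point is that $a_+^{jk} = O(\blambda^{-\infty})$ whenever $Q_j$ is not outgoing related to $Q_k$, and $a_-^{jk} = O(\blambda^{-\infty})$ whenever $Q_j$ is not incoming related to $Q_k$; this is the semiclassical wavefront translation of the unrelated-wavefront hypotheses.

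Granted this decomposition, the three cases of the proposition follow cleanly. When $\text{WF}_h(Q_j) \cap \text{WF}_h(Q_k) \neq \emptyset$, both branches may be present, and the proof of Proposition \ref{dispersive estimate1} transfers verbatim since it used only the two-branch oscillatory structure and the symbol estimates, not the diagonal condition $j = k$. When $Q_j$ is not outgoing related to $Q_k$ and $t - s < 0$, only the $a_-^{jk}$ branch contributes, and the phase $(t-s)\blambda^2 - \blambda d(z,z')$ has derivative $2(t-s)\blambda - d(z,z')$ of constant sign with absolute value bounded below by $2|t-s|\blambda + d(z,z')$ for all $\blambda > 0$. I would then perform the short-time rescaling $\blambda \mapsto |t-s|^{-1/2}\blambda$ (for the short-time estimate) or leave the integral unscaled (for the long-time estimate), introduce a dyadic Littlewood--Paley partition in $\blambda$, and iterate the integration-by-parts operator
\begin{equation*}
L_- = \frac{1}{i(2(t-s)\blambda - d(z,z'))}\,\frac{\partial}{\partial \blambda},
\end{equation*}
mirroring the treatment of $T_{k,-}$ and $I_j$ in the proof of Proposition \ref{dispersive estimate1}. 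This yields both the long-time bound $|t-s|^{-\infty} e^{-nd(z,z')/2}$ and the short-time bound $|t-s|^{-(n+1)/2}(1+d(z,z'))^{n/2}e^{-nd(z,z')/2}$. The third case, $Q_j$ not incoming related to $Q_k$ with $t > s$, is entirely symmetric: only the $a_+^{jk}$ branch survives and the phase derivative $2(t-s)\blambda + d(z,z')$ is strictly positive.

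The main obstacle I anticipate is the rigorous off-diagonal Lagrangian decomposition, namely identifying $Q_j(\blambda)\, dE_P(\blambda)\, Q_k^*(\blambda)$ as a semiclassical Lagrangian distribution associated to $\Lambda$ with controlled symbols, and translating the wavefront relations into the infinite-order vanishing of the appropriate branch. This requires the semiclassical propagation calculus for compositions of $0$-pseudodifferential operators with semiclassical Fourier integral operators on $X_0^2$, together with an explicit parameterization of the two components of $\Lambda$ by the forward versus backward direction of the bicharacteristic flow and a careful accounting of the symbol bounds from Proposition \ref{derivative spectral measure estimate} under composition. Once this structural reduction is settled, the oscillatory integral estimates are a direct transcription of the arguments already deployed in the diagonal case, with the $e^{-nd(z,z')/2}$ spatial decay factor tracked through the symbol bounds exactly as in Proposition \ref{dispersive estimate1}.
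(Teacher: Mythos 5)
Your overall strategy is in the same spirit as the paper's: reduce to a classification of pairs $(Q_j, Q_k)$, observe that the unrelated-wavefront hypothesis kills one branch of the Lagrangian, and then rerun the integration-by-parts arguments of Proposition~\ref{dispersive estimate1}. You also correctly anticipate that the weight of the proof lies in justifying the off-diagonal Lagrangian decomposition. However, there is a genuine gap in how you propose to close it.

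The claim that, after refining the partition of unity, one always has
\begin{equation*}
Q_j(\blambda)\, dE_P(\blambda)\, Q_k^*(\blambda)
= \blambda^n e^{i\blambda d(z,z')} a_+^{jk} + \blambda^n e^{-i\blambda d(z,z')} a_-^{jk} + O(\blambda^{-\infty})
\end{equation*}
with $\pm\blambda d(z,z')$ as the phase, cannot be correct for all pairs $(j,k)$. This representation uses $d(z,z')$ to parametrize the flow-out $\Lambda$, and that parametrization fails precisely once a bicharacteristic segment passes through a conjugate point---which is the reason the diagonal microlocalization $(Q_k,Q_k^\ast)$ was introduced in the first place. Shrinking each $\text{WF}_h(Q_k)$ does not help: when $\text{WF}_h(Q_j)$ and $\text{WF}_h(Q_k)$ are disjoint, the bicharacteristics joining them have length bounded below by a fixed $\epsilon>0$ (so conjugate points can and do occur along them), and no refinement of the cover changes that. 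Accordingly, the paper's Lemma~\ref{classification of microlocalization} is a trichotomy, not a uniform reduction: the distance-function form \eqref{spectral measure estimates} is asserted \emph{only} for overlapping microsupports (cases where $B_j\cap B_k\neq\emptyset$), while disjoint pairs are classified as ``not outgoing related'' or ``not incoming related'' with no such claim made.

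The paper's route around this is to abandon $d(z,z')$ as the phase in the unrelated cases. Instead it writes $U_j(t)U_k^\ast(s)$ as a Fourier integral operator with a general phase $\phi(z,z',\theta)$ parametrizing the surviving branch of $\Lambda$, and observes that on the critical set $\{\phi'_\theta = 0\}$ the value of $\phi$ equals the bicharacteristic arc length $r(z,z')$, whence $|\phi| = r(z,z') \geq d(z,z')$ and $\phi$ has a \emph{definite sign}. Combined with the sign of $t-s$, this makes the full phase $(t-s)\blambda^2 + \blambda\phi$ non-stationary, and the integration-by-parts operator
\begin{equation*}
\frac{-i}{2\blambda - \phi/\sqrt{s-t}}\,\frac{\partial}{\partial\blambda}
\end{equation*}
plays the role of your $L_-$. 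The lower bound $|\phi|\geq d(z,z')$ then delivers the exponential decay factor $e^{-n d(z,z')/2}$ and the polynomial prefactors exactly as your proposal envisions, but without requiring $\phi$ to equal $d(z,z')$. To repair your proof you would need to replace ``$d(z,z')$ as phase'' with ``a Lagrangian phase $\phi$ satisfying $|\phi|\geq d(z,z')$ with sign determined by the outgoing/incoming relation,'' and supply the geometric classification lemma (the paper's Lemma~\ref{classification of microlocalization}) showing that a suitable refinement of the partition really does make every pair fall into one of the three cases.
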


Before proving the dispersive estimates, we have to refine the microlocalization for spectral measure and categorize all microlocalization pairs $\{(Q_j, Q_k^\ast)\}_{j, k = 1}^N$, where $Q_0$ is neglected as it is not on the semiclassical wavefront set of spectral measure. \begin{lemma}\label{classification of microlocalization} The microlocalization pair $(Q_j, Q_k^\ast)$ with $j, k\geq 1$ must obey one of the following relations:

\begin{enumerate}\renewcommand{\labelenumi}{$($\roman{enumi}$)$}

\item $Q_j$ is not outgoing related to $Q_k$

\item $Q_j$ is not incoming related to $Q_k$

\item  The off-diagonal microlocalized spectral measure at high energy takes the form \begin{eqnarray}\label{spectral measure estimates}  Q_j(\blambda) \frac{d^j}{d\blambda^j} dE_{P}(\blambda) Q_k^\ast (\blambda) = e^{i \blambda d(z, z')} \blambda^n M_+ + e^{-i \blambda d(z, z')} \blambda^n M_- ,\quad \mbox{for large $\blambda$,}\end{eqnarray} where $M_\pm$ are defined on the forward and backward bicharacteristic flow respectively and satisfying \begin{eqnarray*}\frac{d^j M_\pm}{d\blambda^j} = \left\{ \begin{array}{l@{\quad , \quad}l}  O\Big(\blambda^{- j}\big(1 + \blambda d(z, z^\prime)\big)^{- n/2} \Big) & \mbox{if $d(z, z^\prime)$ is small}\\ O\Big(\blambda^{- n/2 - j} e^{- n d(z, z^\prime) / 2}\Big) & \mbox{if $d(z, z^\prime)$ is large}\end{array}\right..\end{eqnarray*}
\end{enumerate}\end{lemma}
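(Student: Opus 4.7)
The plan is to deduce the trichotomy from the Lagrangian/FIO structure of the high-energy spectral measure recalled in Section \ref{sec : spectral measure}. Recall that at high energy the semiclassical wavefront set of $dE_{P}(\blambda)$ lies on the bicharacteristic flow-out $\Lambda \subset {}^0 S^\ast X^\circ \times {}^0 S^\ast X^\circ$, which splits as $\Lambda = \Lambda_+ \cup \Lambda_-$ into the forward and backward flow-outs of $N^\ast \diag$. The outgoing branch of the resolvent contributes to $\Lambda_+$ (producing the oscillation $e^{i \blambda d(z, z')}$), and the incoming branch contributes to $\Lambda_-$ (producing $e^{-i \blambda d(z, z')}$). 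These are precisely the two Lagrangian pieces that appear in Proposition \ref{derivative spectral measure estimate} in the diagonal case $j = k$.

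First I would refine the original partition $\{Q_k\}_{k=1}^N$ by subdividing each microlocal support $\text{WF}_h(Q_k)$ into pieces small enough that, for every ordered pair $(j,k)$, either the forward flow-out $g^t(\text{WF}_h(Q_k))$ for $t \geq 0$ stays uniformly away from $\text{WF}_h(Q_j)$ or actually meets it, and similarly for $t \leq 0$. This is possible because non-trapping guarantees that every bicharacteristic escapes to the boundary in finite time modulo the core, so the flow-outs $g^t(\text{WF}_h(Q_k))$ have well-defined qualitative behavior relative to any other small microlocal neighborhood.

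Then for each pair $(Q_j, Q_k^\ast)$ with $j,k \geq 1$ I would check the two flow conditions in turn. If the forward flow-out of $\text{WF}_h(Q_k)$ never meets $\text{WF}_h(Q_j)$, then $\Lambda_+ \cap (\text{WF}_h(Q_j) \times \text{WF}_h(Q_k)) = \emptyset$, so $Q_j$ is not outgoing related to $Q_k$ and case (i) holds. Symmetrically the empty backward-flow intersection yields case (ii). If neither condition holds, both pieces of $\Lambda$ contribute: I would then apply the composition calculus of Fourier integral operators (or equivalently of intersecting Lagrangian distributions) with the pseudodifferential factors $Q_j$ and $Q_k^\ast$ to the outgoing and incoming halves of $dE_P(\blambda)$ separately. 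The phase stays $\blambda d(z,z')$ away from the diagonal, and the principal symbol estimates carry over from those of $dE_P(\blambda)$ in Proposition \ref{derivative spectral measure estimate}. The regime split in the bound on $M_\pm$ (small versus large $d(z,z')$) follows from the same small/large energy and small/large distance analysis used in \cite{Chen-Hassell2}, combined with \eqref{asymptotic of distance function}; derivatives in $\blambda$ produce either $\blambda^{-1}$ or $d(z,z')$ factors, which are controlled uniformly by the stated estimates.

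The main obstacle is arranging the refinement of the partition so that the dichotomy ``the forward flow-out either meets $\text{WF}_h(Q_j)$ or avoids it uniformly'' holds simultaneously for every pair and in a manner consistent with a pseudodifferential partition of unity. Non-trapping and the uniform structure of the $0$-bicharacteristic flow near the boundary make this possible, but the bookkeeping needs care near the corner of $X_0^2$ where conjugate points may concentrate and where the splitting $\Lambda = \Lambda_+ \cup \Lambda_-$ must still be maintained on the level of microlocal supports. Once the refinement is fixed, the derivations of cases (i)--(iii) reduce to the Lagrangian calculus already developed in \cite{Chen-Hassell1, Chen-Hassell2}.
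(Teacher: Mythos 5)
The paper's proof does not attempt to derive case (iii) from a generic FIO composition whenever (i) and (ii) both fail; that is the step in your plan that does not go through. The paper instead observes that the dichotomy is governed by whether the microsupports $B_j$ and $B_k$ intersect. When they intersect (Cases 1--3 of the paper's proof), the two cut-offs can be enclosed in a single slightly larger slice or ball $B_{jk}$, and \eqref{spectral measure estimates} is inherited from the diagonal estimate for $Q_{jk} dE_P Q_{jk}^\ast$; this containment trick is what you would have to reproduce, and "the composition calculus with $Q_j$ and $Q_k^\ast$" is not a substitute for it. When they are disjoint (Cases 4--6), the paper proves that one of the flow relations \emph{must} fail: near the boundary by the monotonicity of $\lambda$ along the $0$-Hamilton flow, and in the compact interior by a contradiction with the non-trapping condition (simultaneous outgoing and incoming relations between disjoint interior cut-offs would produce a periodic bicharacteristic). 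Your proposal assumes that "if neither (i) nor (ii) holds, the phase stays $\blambda\,d(z,z')$ away from the diagonal and the symbol bounds carry over," but for disjoint microsupports connected by long bicharacteristic segments this is exactly where conjugate points can occur and where the clean $e^{\pm i\blambda d(z,z')}$ parametrization can break down. The paper never has to estimate that regime because it shows it cannot arise; your proof would have to supply the missing argument that it cannot arise, or else genuinely establish the estimate in that regime, neither of which is done.

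A second, smaller point: the refinement you describe ("subdivide so that each forward flow-out either meets $\text{WF}_h(Q_j)$ or avoids it uniformly") is not the refinement the paper uses and is not obviously realizable by a finite pseudodifferential partition near the corner of $X_0^2$. The paper's partition is organized by the value of the boundary fibre variable $\lambda$ in small intervals $I_i$ on a collar $\{x\le 2\epsilon\}$, together with small balls over the compact interior, and it is this specific structure — in particular the fact that $\lambda$ is monotone along forward bicharacteristics near $\partial X$ — that makes the disjoint-support dichotomy checkable. You should use that structure rather than postulate an abstract refinement.
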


\begin{proof}\footnote{The proof is essentially  due to Guillarmou and Hassell \cite{Guillarmou-Hassell}.}

Recall in Section \ref{sec : spectral measure}  we have taken a negatively curved strip neighbourhood of $\partial X$, say $\{x \leq 2\epsilon\}$.  Also recall the cubes $Q_1, \dots, Q_{N_1}$ supported on $B_1, \dots, B_{N_1} $, affiliated with $I_1, \cdots, I_{N_1}$, over $\{x \leq \epsilon\}$ and $Q_{N_1 + 1}, \dots, Q_{N_2}$ supported on $B_{N_1 + 1}, \dots, B_{N_2}$ over $\{x > \epsilon\}$. Now we also assume $\{x \geq 2\epsilon\}$ is compact and geodesically convex. It is true as long as $\epsilon$ is sufficiently small.

Therefore there are following cases of microlocalization:

\begin{enumerate}

\item $(Q_j, Q_k^\ast)$ with $B_j \cap B_k \neq \emptyset$, $1 \leq j \leq N_1$ and $1 \leq k \leq N_1$

Since $B_j$ is intersected with $B_k$, then $I_j$ is intersected with $I_k$. One may prescribe $k = j + 1$. Note both $I_j$ and $I_{j + 1}$ are small subintervals in $[-3/2, 3/2]$ and they are contained in  $I_{j} \cup I_{j + 1}$. Then one can find a slice $B_{j, j + 1}$ in $\{x < \epsilon\} \cap \{\lambda \in I_j \cup I_{j + 1}\}$. Since $I_{j} \cup I_{j + 1}$ is a small interval in $[-3/2, 3/2]$ too,  we can find a pseudodifferential operator $Q_{j, j+1}$ microlocally supported on $B_{j, j+1}$ such that  $Q_{j, j+1} dE_{P} Q_{j, j+1}^\ast $ satisfies \eqref{spectral measure estimates}. So does $Q_{j} dE_{P} Q_{k}^\ast$.

\item $(Q_j, Q_k^\ast)$ with $B_j \cap B_k \neq \emptyset$, $N_1 \leq j \leq N_2$ and $N_1 \leq k \leq N_2$ 

Since the diameter of $B_j \cup B_k$ is bounded by a very small number, $B_j$ and $B_k$ are contained in a very small cube $B_{jk}$. Then $Q_{jk} dE_{P} Q_{jk}^\ast$, with $Q_{jk}$ microlocally supported on $B_{jk}$, satisfies (\ref{spectral measure estimates}). Consequently, so does $Q_{j} dE_{P} Q_{k}^\ast$.

\item $(Q_j, Q_k^\ast)$ with $B_j \cap B_k \neq \emptyset$, $1 \leq j \leq N_1$ and $N_1 \leq k \leq N_2$ 

Since the diameter of $B_k$ is very small in the sense of Sasaki distance, we can narrow the range of $\lambda$ variable in $I_j$ and the range of $x$ variable in $\{x < 2 \epsilon\}$ such that both $B_k$ and $B_j$ are contained in a small slice $B_{jk}$ near the boundary with $\{\lambda \in I_j\}$. Then we again find a pseudodifferential operator $Q_{jk}$ microlocally supported on $B_{jk}$ such that  $Q_{jk} dE_{P} Q_{jk}^\ast$ satisfies (\ref{spectral measure estimates}).

\item $(Q_j, Q_k^\ast)$ with $B_j \cap B_k = \emptyset$, $1 \leq j \leq N_1$ and $1 \leq k \leq N_1$

Recall from \cite{Melrose-Sa Barreto-Vasy} or \cite{Chen-Hassell1} that the $0$-Hamilton vector field with Hamiltonian $p = \lambda^2 + h(y, \lambda, \mu)$ on asymptotically hyperbolic manifold $X$ is $$x \frac{\partial p}{\partial \lambda} \frac{\partial}{\partial x} + x \frac{\partial p}{\partial \mu} \cdot \frac{\partial}{\partial y} - \bigg(\mu \cdot \frac{\partial p}{\partial \mu} + x \frac{\partial p}{\partial x}\bigg) \frac{\partial}{\partial \lambda} + \bigg(\frac{\partial p}{\partial \lambda} \mu - x \frac{\partial p}{\partial y}\bigg) \cdot \frac{\partial}{\partial \mu}.$$ The variable $\lambda$, along the geodesic, decreases down to $-1$ , in a small neighbourhood of the boundary.

Without loss of generality, one may assume $\inf (I_j) > \sup (I_k)$. Take a geodesic $\gamma(t)$ with $\gamma(0) \in B_k$. If $\gamma(t)$ stays in $\{x < \epsilon \}$ for $t \geq 0$,  $\{\gamma(t) : t \geq 0\}$ will be disjoint from $B_j$, since $\lambda$ is nonincreasing along the forward bicharacteristic near the boundary. On the other hand, if $\gamma(t)$ goes beyond $\{x < \epsilon \}$ at time $t_2$,(i.e. $\gamma(t_2) \in \{x \geq \epsilon\}$) we have $\lambda(0) > 0$, hence $\inf (I_j) > \sup (I_k) > 0$. So we can find a maximal interval $(t_1, t_3)$ containing $t_2$ on $\{x \geq \epsilon\}$ such that $\lambda(t) > 0$ for all $t < t_1$ and  $\lambda(t) < 0$ for all $t > t_3$, since $\lambda$ is nonincreasing in $\{x < \epsilon\}$.  Consequently, $\gamma$ is disjoint from $B_j$ whenever $t > 0$: when $0 < t < t_1$ i.e. $\lambda < \lambda(0) < \inf (I_j)$; when $t_2 < t < \infty$ i .e. $\lambda < 0 < \inf (I_j)$.

\item $(Q_j, Q_k^\ast)$ with $B_j \cap B_k = \emptyset$, $1 \leq j \leq N_1$ and $N_1 \leq k \leq N_2$

Take a geodesic $\gamma$ with $\gamma(0) \in B_j$. If $\sup I_j < 0$, then $x(t)$ is non-increasing namely $\gamma(t)$ will stay in $\{x < \epsilon\}$ for $t > 0$ and be disjoint from $B_k$. In the meantime, if $\inf I_j > 0$, $\gamma(t)$ will stay in $\{x < \epsilon\}$ for $t < 0$. If $0 \in I_j$ and $\lambda(t_0) = 0$, $x(t)$ is nonincreasing for all $t > t_0$, since $\lambda$ is non-positive afterwards. So $\gamma(t)$ will stay in $\{x < \epsilon\}$ for all $t > t_0$.

\item $(Q_j, Q_k^\ast)$ with $B_j \cap B_k = \emptyset$, $N_1 \leq j \leq N_2$ and $N_1 \leq k \leq N_2$

Consider the function $(z, t) \rightarrow x(g^t)$. Since $d x(g^t(z)) / dt \neq 0$ locally in $\{x > \epsilon/2\}$, we apply implicit function theorem and get an implicit function $t(z)$. We can find a time $t(z)$ such that $x(g^{t(z)}) = \epsilon/2$. Therefore, for any compact set $K \subset \{|\zeta| = 1\} \cap \{x > \epsilon/2\}$,  there is a $T_+ > 0$ respectively $T_- < 0$ such that $g^t(K) \subset \{x < \epsilon / 2\}$ for all $t > T_+$ respectively $t < T_-$. Assuming $B_j$ and $B_k$ are outgoing related and incoming related, we shall get a contradiction. Under this hypothesis and the compactness, there exist two sequences of points $\{z_l\} \subset \{x > \epsilon/2\} $ and  $\{z_l^\prime\} \subset \{x > \epsilon/2\}$ with two sequences of times $\{t_l : t_l < - \iota < 0\}$ and $\{t_l^\prime : t_l^\prime > \iota > 0\}$ both going to the same point $z \in \{z > \epsilon/2\}$ via the the geodesic $g^t$, that is $$\lim_{l \rightarrow \infty} g^{t_l} (z_l) = \lim_{l \rightarrow \infty} g^{t^\prime_l} (z_l^\prime).$$  Since $T_- \leq t < - \iota < \iota < t^\prime \leq T_+$, we can find accumulation points $t$ and $t^\prime$ respectively. Then we have $g^t(z) = g^{t^\prime}(z)$. It gives a periodic geodesic which contradicts the non-trapping condition. Therefore we have either   $Q_j$ is not outgoing related to $Q_k$ or $Q_j$ is not incoming related to $Q_k$.

\end{enumerate}

\end{proof}

With this lemma, we can prove the dispersive estimates for off-diagonal microlocalized high energy truncated Schr\"{o}dinger propagators.

\begin{proof}[Proof of Proposition \ref{dispersive estimate2}] It is proved by the argument of Proposition \ref{dispersive estimate1} with minor changes based on the classification of microlocalizations in Lemma \ref{classification of microlocalization}.

 If $Q_j dE_{P}(\blambda) Q_k^\ast$ obeys (\ref{spectral measure estimates}), we can get desired dispersive estimates by repeating the proof of Proposition \ref{dispersive estimate1}.

  If $Q_j$ and $Q_k$ are not outgoing related\footnote{The proof is exactly the same in case $Q_j$ and $Q_k$ are not incoming related.}, we claim the $U_j(t) U_k^\ast(s)$ for $t < s$ is a Fourier integral operator $$\int_0^\infty \int_{\mathbb{R}^{N}} e^{ i (t - s) \blambda^2 + \blambda \phi} a(h, z, z', \theta) \, d\theta d\blambda,$$ provided $\phi(z, z', \theta) < - \epsilon < 0$ is the phase function of the wavefront set $\Lambda$ of the spectral measure. Since we can always write the propagator in this integral form, the only point we need to justify is that $\phi(z, z', \theta) < - \epsilon < 0$.

Recall that the forward bicharacteristic flow-out is that the flow-out of the Hamilton vector field of the metric function. By standard theory of Lagrangian distributions, the phase function $\phi$ can parametrize the forward flow-out in the following way that is $\Lambda^+$ is locally furnished coordinates $$\{(z, \phi^\prime_z) | \phi_\theta' = 0\}.$$ Hence phase function $\phi$ of forward bicharacteristic flow-out $\Lambda^+$ satisfies $$\phi(z, z', \theta) = r(z, z') \geq d(z, z'), \quad \mbox{when $\phi'_\theta = 0$}$$ where $r$ is the curve length along the bicharacteristic and $d$ is the geodesic distance. Since $Q_j$ is not outgoing related to $Q_k$, i.e. the forward geodesic flow-out of $\text{WF}_h Q_k^\ast$ doesn't meet $\text{WF}_h Q_j$, they are connected by the backward flow-out, namely $$\phi = -r(z, z') \leq -d(z, z') < 0.$$ The not outgoing relation gives a constantly negative sign of the phase function $\phi$ of microlocalized spectral measure $Q_j(\blambda) dE_{P}(\blambda) Q_k^\ast(\blambda)$.   Since $t - s < 0$, the phase function of the propagator is negative. So it allows us to play the integration by parts argument in Proposition \ref{dispersive estimate1} by the differential operator $$\frac{-i}{2 \blambda -  \phi / \sqrt{s - t}} \frac{\partial}{\partial \blambda}$$ to get the prove \eqref{dispersive2 short} , instead of $-i/(2 \blambda -  t^{-1/2} d(z, z')) \partial_\blambda$ in the proof of \eqref{short time high energy dispersive}. On the other hand, noting $\phi$ and $t-s$ have the same sign, namely the phase is non-stationary, we apply the rapid decay estimates, which reaily shows \eqref{dispersive2 long}.

\end{proof}

\section{Strichartz estimates}

We turn to proving Theorem \ref{inhomogeneous strichartz}. 

First of all, we shall establish the Strichartz estimates $$\|u\|_{L^q(\mathbb{R}, L^r(X))} \leq C \|f\|_{L^2(X)}$$for the homogeneous equations (i. e. $F\equiv 0$). Recall the low energy truncated propagator  and high energy microlocalized propagators. The solution $u$ of the homogeneous equation reads $$e^{itn^2/4}u(t, x) = \bigg( U_{\text{low}}(t) + \sum_{j = 0}^N U_j(t) \bigg) f (z)$$ for $0 < t < 1$, where $$U_{\text{low}}(t) = \int_0^\infty e^{it\blambda^2} \chi_{\text{low}}(\blambda) \, dE_{P}(\blambda) \quad \mbox{and} \quad  U_j = \int_0^\infty e^{it\blambda^2} \chi_\infty(\blambda) Q_j(\blambda) \, dE_{P}(\blambda).$$

The Strichartz estimates for homogeneous equations  $$\|e^{i t P^2} f(z)\|_{L^q_t L^r_z} \leq C \|f\|_{L^2_z}$$ are equivalent to $$\bigg\|\int e^{- i s P^2} G(s, z) \, ds \bigg\|_{L^2_z} \leq C \|G\|_{L^q_t L^r_z}.$$ Noting the decomposition $$e^{i(-s)P^2} = U_{\text{low}}^\ast(s) + \sum_{j = 0}^N U_j^\ast(s),$$   it suffices to show $$\bigg\|\int U_k^\ast(s) G(s, z) \, ds \bigg\|_{L^2_z} \leq C \|G\|_{L^q_t L^r_z},$$ where $k \in \{0 , 1, \dots, N, \text{low}\}$. By $TT^\ast$, it is equivalent to $$\bigg\|\int \big( U_k(t)U_k^\ast(s) F(s, z) \big) ds \bigg\|_{L_t^{q} L_z^{r}} \leq C \|F\|_{L_s^{q^\prime} L_z^{r^\prime}} .$$

One can split the left hand side by time. The long time part reduces to \begin{eqnarray}\label{short time homogeneous Strichartz norm}\bigg(\int  \bigg\| \int_{|t - s| \geq 1 }  U_k(t) U_k^\ast(s) F(s, z) \, ds\bigg\|_{L^r_z}^q\, dt \bigg)^{1/q} , \end{eqnarray} in the meantime, the short time part reduces to \begin{eqnarray}\label{long time homogeneous Strichartz norm}\bigg(\int  \bigg\| \int_{|t - s| \leq 1 } U_k(t) U_k^\ast(s) F(s, z)   \, ds\bigg\|_{L^r_z}^q  \, dt \bigg)^{1/q} . \end{eqnarray} 

To estimate these integrals, we need following mapping properties of the propagators, which we shall prove in the last section,

\begin{lemma}[Long times]\label{long time lemma}Suppose $|t - s| \geq 1$ and $2< r, \tilde{r} \leq \infty$. Then the following inequalities hold \begin{eqnarray} \label{dispersive low long} &\| U_{\text{low}}(t)U_{\text{low}}^\ast(s)\|_{L_z^{\tilde{r}^\prime} \rightarrow L_z^r}  \leq C  |t - s|^{-3/2} &  \\ \label{dispersive high long}&\| U_j(t)U_k^\ast(s)\|_{L_z^{\tilde{r}^\prime} \rightarrow L_z^r}  \leq C  |t - s|^{-3/2} &  \end{eqnarray}where the last one only holds for either $t - s > 1$ or $s - t > 1$ if $j \neq k$ and for both if $j = k$.\end{lemma}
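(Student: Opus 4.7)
The plan is to extract the $L^{\tilde r'}\to L^r$ operator bound from the pointwise kernel estimates of Propositions~\ref{dispersive estimate1} and \ref{dispersive estimate2} via a Kunze-Stein type argument. First, for $|t-s|\geq 1$ I would unify the two regimes appearing in those propositions. In the short-time regime $|t-s|<1+d(z,z')$ the inequality forces $d(z,z')\geq|t-s|-1$, so the factor $e^{-nd(z,z')/2}$ present in \eqref{short time high energy dispersive} and \eqref{short time low energy dispersive} decays exponentially in $|t-s|$, dominating the polynomial $|t-s|^{-(n+1)/2}(1+d)^{n/2}$ factor. Combined with the long-time estimates \eqref{long time dispersive microlocalized} and \eqref{short time low energy dispersive}, the Schwartz kernel $K(t,s;z,z')$ of each operator in the lemma satisfies the unified pointwise bound
\[ |K(t,s;z,z')|\leq C\,|t-s|^{-3/2}\,\Phi(d(z,z')), \qquad \Phi(d)=(1+d)\,e^{-nd/2}, \]
under the flow-relation hypothesis of Proposition~\ref{dispersive estimate2} whenever $j\neq k$.

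Second, I would reduce the lemma to proving that the time-independent integral operator $T_\Phi$ with kernel $\Phi(d(z,z'))$ maps $L^{\tilde r'}(X)\to L^r(X)$ boundedly for every pair $r,\tilde r\in(2,\infty]$. The key geometric input is the uniform exponential volume growth on an asymptotically hyperbolic manifold, $\mathrm{vol}\{z'\in X:d(z,z')\leq R\}\lesssim e^{nR}$ uniformly in $z$, which yields
\[ \sup_{z\in X}\int_X \Phi(d(z,z'))^{p}\,dz'<\infty \quad\text{for every } p>2, \]
so that $\Phi(d(z,\cdot))$ sits uniformly in $L^p$ for all $p>2$, with the borderline at the hyperbolic Lorentz space $L^{2,\infty}$. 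With this input, the Kunze-Stein phenomenon used by Anker-Pierfelice and Ionescu-Staffilani on $\HH^{n+1}$—which transfers to $X$ because only the exponential volume growth and pointwise kernel decay enter the argument—gives boundedness of $T_\Phi$ from $L^{\tilde r'}$ to $L^r$ over the entire range $r,\tilde r>2$, with norm independent of $t-s$. Reinserting the scalar factor $|t-s|^{-3/2}$ then finishes the proof.

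The main obstacle is precisely this last Kunze-Stein step. Straightforward Riesz-Thorin interpolation between the endpoint dispersive bound $L^1\to L^\infty$ (of size $|t-s|^{-3/2}$) and the energy bound $L^2\to L^2$ (of size $O(1)$) yields only the degraded rate $|t-s|^{-3(1-2/r)/2}$, which reaches $|t-s|^{-3/2}$ in the limit $r=\infty$ and fails to cover the target pair whenever $1/r+1/\tilde r\geq 1/2$. Obtaining the uniform rate $|t-s|^{-3/2}$ across the entire quadrant $r,\tilde r>2$ is a genuine hyperbolic improvement that forces one to exploit the exponential decay $e^{-nd/2}$ of the kernel rather than merely its pointwise supremum. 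On $\HH^{n+1}$ this is Cowling's Kunze-Stein bound for convolution with radial $L^{2,\infty}$ functions; on a general asymptotically hyperbolic $X$ there is no underlying convolution structure, and the substitute is the uniform-in-base-point volume-growth estimate above combined with the exponential decay of $\Phi$, which together supply the off-diagonal smoothing that yields the full range of mapping properties.
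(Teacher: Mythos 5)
Your overall strategy is correct and essentially matches the paper's: unify the dispersive bounds into a single time-independent kernel $\Phi(d)$, factor out $|t-s|^{-3/2}$, and reduce to showing that $T_\Phi$ maps $L^{\tilde r'}\to L^r$ for all $r,\tilde r>2$ via a Kunze--Stein type improvement that interpolation alone cannot supply. You also correctly diagnose that naive $L^1\!\to\!L^\infty$ plus $L^2\!\to\!L^2$ interpolation degrades the rate; the paper instead interpolates among $L^1\!\to\!L^r$, $L^{r'}\!\to\!L^\infty$, and $L^{r'}\!\to\!L^r$, each with rate $|t-s|^{-3/2}$, with the last being the nontrivial one.

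There is, however, a genuine gap at exactly the step you flag as the ``main obstacle.'' Your claim that the Kunze--Stein phenomenon ``transfers to $X$ because only the exponential volume growth and pointwise kernel decay enter the argument'' is not correct, and the ``substitute'' you offer (volume growth plus exponential decay of $\Phi$) is asserted rather than proved. The Kunze--Stein inequality \eqref{Kunze-Stein2} on $\HH^{n+1}$ is a genuine beyond-Young's-inequality statement: Young's inequality on a general space with exponential volume growth, applied to a kernel that is uniformly in $L^p$ only for $p>2$, reaches at best a strict subset of the quadrant $r,\tilde r>2$; the full quadrant is a symmetric-space phenomenon tied to the convolution structure and the Harish--Chandra spherical function decay (Cowling, Anker--Pierfelice), neither of which is available on a general asymptotically hyperbolic manifold. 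The paper closes this gap with a comparison argument you have not replicated: it splits the kernel on $X^2_0$ into a piece supported near the front face and a piece supported away from it. Away from the front face, $d(z,z')\sim -\log(xx')$ and a direct integral estimate suffices. Near the front face, it uses the local diffeomorphism $\Phi_i$ from a front-face neighbourhood $U_i\subset X^2_0$ to the corresponding neighbourhood in $(\HH^{n+1})^2_0$ (which preserves $\rho_L$, $\rho_R$ and hence the kernel $\Phi(d)$ up to bounded factors) to transport the operator to $\HH^{n+1}$, where \eqref{Kunze-Stein2} genuinely applies. This transfer-to-the-model step is the essential ingredient missing from your argument.

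A minor additional point: the unified majorant $\Phi(d)=(1+d)e^{-nd/2}$ does not quite dominate the intermediate-time regime $1<|t-s|<1+d$, where the short-time bound $|t-s|^{-(n+1)/2}(1+d)^{n/2}e^{-nd/2}$ requires $\Phi(d)=(1+d)^{n/2}e^{-nd/2}$ when $n>2$. This is harmless since $(1+d)^{n/2}e^{-nd/2}$ still lies uniformly in $L^p$ for all $p>2$, but as written your reduction step is slightly off.
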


\begin{lemma}[Short times]\label{short time lemma}Suppose $0 < |t - s| < 1$ and $2< r, \tilde{r} \leq \infty$. Then the following inequalities hold\begin{eqnarray} \label{dispersive low} &\| U_{\text{low}}(t)U_{\text{low}}^\ast(s)\|_{L_z^{\tilde{r}^\prime} \rightarrow L_z^r}  \leq C  |t - s|^{- \max\{1/2 - 1/r, 1/2 - 1/\tilde{r}\} (n + 1)} & \\ \label{dispersive high}&\| U_j(t)U_k^\ast(s)\|_{L_z^{\tilde{r}^\prime} \rightarrow L_z^r}   \leq C |t - s|^{- \max \{1/2 - 1/r, 1/2 - 1/\tilde{r}\} (n + 1)} &  ,\end{eqnarray} where the last one only holds for either $0 < t - s< 1$ or $0 < s - t < 1$ if $j \neq k$ and for both if $j = k$.\end{lemma}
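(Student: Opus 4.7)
The plan is to combine the pointwise kernel estimates of Propositions \ref{dispersive estimate1}--\ref{dispersive estimate2} with the $L^2$ energy bound from Proposition \ref{energy estimate}, and interpolate. For short times $|t-s|<1$, each of the kernels in question obeys a pointwise bound of the form
\[
  |K(t,s,z,z')|\ \leq\ C|t-s|^{-A}\,(1+d(z,z'))^{B}\,e^{-n d(z,z')/2},
\]
with $(A,B)=((n+1)/2,\,n/2)$ for $U_j(t)U_k^\ast(s)$ and $(A,B)=(3/2,\,1)$ for $U_{\text{low}}(t)U_{\text{low}}^\ast(s)$. Taking the supremum gives the $L^1\to L^\infty$ estimate with constant $C|t-s|^{-A}$, but pairing this with the $L^2\to L^2$ bound only yields \emph{diagonal} estimates $L^{r'}\to L^r$ via Riesz--Thorin; reaching the asymmetric target $L^{\tilde r'}\to L^r$ requires an additional endpoint.

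That endpoint comes from integrating the kernel in a single spatial variable. On an asymptotically hyperbolic manifold the geodesic ball volume grows like $e^{n\rho}$, so for any $R>2$,
\[
  \int_X (1+d(z,z'))^{BR}\,e^{-nRd(z,z')/2}\,dv(z') \ \leq\ C_R \int_0^\infty (1+\rho)^{BR}\,e^{(1-R/2)n\rho}\,d\rho \ <\ \infty,
\]
uniformly in $z$. Hence $\sup_z\|K(z,\cdot)\|_{L^R_{z'}}\leq C_R|t-s|^{-A}$, i.e., $\|UU^\ast\|_{L^1\to L^R}\leq C_R|t-s|^{-A}$; by the symmetry $K(z,z')=\overline{K(z',z)}$ of the self-adjoint product $UU^\ast$, also $\|UU^\ast\|_{L^{R'}\to L^\infty}\leq C_R|t-s|^{-A}$.

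For $\tilde r>r>2$, apply Riesz--Thorin to $\{L^1\to L^R,\ L^2\to L^2\}$ with parameter $\theta=2/\tilde r$; the choice $R=r(\tilde r-2)/(\tilde r-r)>2$ sends the target to $L^r$, producing the bound $C|t-s|^{-A(1-2/\tilde r)}=C|t-s|^{-2A(1/2-1/\tilde r)}$, which for the high-energy piece reads exactly $C|t-s|^{-(n+1)(1/2-1/\tilde r)}$. Since $\max\{1/2-1/r,\,1/2-1/\tilde r\}=1/2-1/\tilde r$ in this range, this matches the claim. The symmetric case $r>\tilde r>2$ uses instead $\{L^{R'}\to L^\infty,\ L^2\to L^2\}$ with $\theta=2/r$, and the diagonal case $r=\tilde r$ follows from direct interpolation between $L^1\to L^\infty$ and $L^2\to L^2$. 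For the low-energy piece one gets the analogous bound $C|t-s|^{-3(1/2-1/\tilde r)}$ (or with $r$ in place of $\tilde r$), which dominates $C|t-s|^{-(n+1)(1/2-1/\tilde r)}$ for $|t-s|<1$ whenever $n+1\geq 3$. The directional restrictions in the off-diagonal case $j\neq k$ are inherited verbatim from Proposition \ref{dispersive estimate2}: the underlying pointwise estimate is only valid for $t<s$ (resp.\ $t>s$) according as $Q_j$ is not outgoing (resp.\ not incoming) related to $Q_k$, and this passes through the interpolation unchanged.

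The main obstacle is the asymmetry $\tilde r\neq r$: naive Riesz--Thorin between $L^1\to L^\infty$ and $L^2\to L^2$ only reaches the diagonal $1/p+1/q=1$. The extra $L^1\to L^R$ (equivalently $L^{R'}\to L^\infty$) endpoint, which is available precisely because the exponential decay factor $e^{-nd/2}$ in the kernel bound dominates the exponential volume growth whenever $R>2$, is what unlocks the full admissible range $r,\tilde r>2$ and thereby the wider Strichartz admissibility set \eqref{schrodinger admissibility}.
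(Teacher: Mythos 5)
Your proof is correct and follows essentially the same route as the paper: bound the kernel of each truncated/microlocalized propagator pointwise via Propositions \ref{dispersive estimate1} and \ref{dispersive estimate2}, integrate that bound in one spatial variable to obtain $L^1\to L^R$ and $L^{R'}\to L^\infty$ operator bounds with constant $C_R|t-s|^{-A}$ for each $R>2$, and then Riesz--Thorin interpolate against the $L^2\to L^2$ energy estimate (Proposition \ref{energy estimate}) with the parameter choice $\theta=2/\tilde r$, $R=r(\tilde r-2)/(\tilde r-r)$ to reach the asymmetric target $L^{\tilde r'}\to L^r$ with the claimed $|t-s|$-power. The one place where the paper is more careful than your sketch is the uniform-in-$z$ spatial integral: instead of citing a uniform geodesic ball volume bound $\mathrm{vol}(B(z,\rho))\lesssim e^{n\rho}$ outright, the paper works on the blown-up double space $X_0^2$ and splits the kernel into a piece away from the front face, estimated directly via $d(z,z')\approx -\log(\rho_L\rho_R)$, and a piece near the front face, estimated by transplanting the kernel onto $(\HH^{n+1})_0^2$ via a local diffeomorphism and invoking the hyperbolic-space bound; this avoids having to justify the uniform volume comparison in the compact interior.
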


Assuming these lemmas for the moment, we now continue the proof of Strichartz estimates.

We insert \eqref{dispersive high long} and \eqref{dispersive low long}  into \eqref{short time homogeneous Strichartz norm} and get \begin{eqnarray*}\lefteqn{\bigg(\int\bigg(\int_{|t - s| \geq 1 } \Big\| U_k(t) U_k^\ast(s) F(s, z) \Big\|_{L^r_z} \, ds \bigg)^q\,dt \bigg)^{1/q}}\\ &\leq& C \bigg(\int \bigg(\int_{|t - s| \geq 1 } |t - s|^{-3/2} \|  F(s, z) \|_{L^{r'}_z} \, ds \bigg)^q\, dt \bigg)^{1/q} \\ &\leq & \|F(s, z)\|_{L^{q'}_s L_z^{r'}}. \end{eqnarray*} We remark the kernel $|t - s|^{-3/2}\chi_{|t -s| > 1}$ is integrable so it maps $L^{q'}(\mathbb{R})$ to $L^q(\mathbb{R})$ for any $q \geq 2$, where no admissibility is needed.

On the other hand, one can use the short time  estimates (\ref{dispersive low}) and (\ref{dispersive high}). For $(q, r) \neq (2, 2(n + 1)/(n - 1))$, we invoke the admissibility condition \eqref{schrodinger admissibility} and Hardy-Littlewood-Sobolev inequality  \begin{eqnarray*}\lefteqn{\bigg(\int\bigg( \int_{|t - s| \leq 1 }  \| U_k(t) U_k^\ast(s) F(s, z) \|_{L^r_z}  \, ds \bigg)^q\,dt \bigg)^{1/q} }\\ & \leq & C  \bigg(\int\bigg(\int_{|t - s| \leq 1 }  \frac{1}{|t - s|^{ (1/2 - 1/r) (n + 1)}}  \|  F(s, z)  \|_{L^{r^\prime}_z} \, ds\bigg)^q\, dt\bigg)^{1/q}\\ & \leq & C  \bigg(\int\bigg(\int_{|t - s| \leq 1 }  \frac{1}{|t - s|^{ 2/q}}  \|  F(s, z)  \|_{L^{r^\prime}_z} \, ds\bigg)^q\, dt\bigg)^{1/q} \\& \leq &  C \|  F(s, z)  \|_{L^{q^\prime}_s L^{r^\prime}_z}. \end{eqnarray*} Here the last inequality requires $q < 2$, which is invalid for endpoints.

The short time endpoint estimates are proved via dispersive estimates and energy estimates by the standard Keel-Tao argument.

Next, following an argument from \cite{Hassell-Zhang}, we prove the inhomogeneous Strichartz estimates with the homogeneous estimates we have proved, that is $$\|e^{itP^2} f(z)\|_{L^q_t L^r_z} \leq C \|f\|_{L^2_z},$$ provided $(q, r)$ satisfies \eqref{schrodinger admissibility}. These estimates are equivalent to $$\bigg\|\int e^{i(t-s)P^2} F(s)\bigg\|_{L^q_sL^r_z} \leq C \|F\|_{L_t^{\tilde{q}'}L_z^{\tilde{r}'}},$$ provided $(\tilde{q}, \tilde{r})$ also satisfies \eqref{schrodinger admissibility}. By Duhamel's formula, the desired inhomogeneous Strichartz estimates are equivalent to the retarded estimates \begin{equation}\label{retarded estimates}\bigg\|\int_{s < t} e^{i(t-s)P^2} F(s)\bigg\|_{L^q_sL^r_z} \leq C \|F\|_{L_t^{\tilde{q}'}L_z^{\tilde{r}'}}.\end{equation} For the non-endpoint case i.e. neither of $(q, r)$ and $(\tilde{q}, \tilde{r})$ is $(2, 2(n + 1)/(n - 1))$, the retarded estimates \eqref{retarded estimates} follow immediately from Christ-Kiselev lemma \begin{lemma}[\cite{Christ-Kiselev}]Let $X, Y$ be Banach spaces, let $I$ be a time interval, let $K \in C^0(I \times I)$ be a kernel taking values in the space bounded operators from $X$ to $Y$. Suppose that $1 \leq p < q \leq \infty$ and $$\bigg\|\int_I K(t, s) f(s)\,ds \bigg\|_{L^q_t(I \rightarrow Y)} \leq C \|f\|_{L_t^p(I \rightarrow X)}.$$ Then one has $$\bigg\|\int_{\{s \in I : s < t\}} K(t, s)f(s) \,ds\bigg\|_{L_t^q(I \rightarrow Y)} \leq C \|f\|_{L_t^p(I \rightarrow X)}.$$\end{lemma}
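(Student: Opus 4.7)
The plan is to prove the Christ--Kiselev lemma by a dyadic decomposition of $f$ based on its $L^p$ mass distribution, reducing the retarded bound to a summable collection of non-retarded bounds on sub-time-sets. After rescaling I may assume $\|f\|_{L^p_t(I \to X)} = 1$ and, by a standard density argument, that $f$ is nowhere zero, so that the cumulative mass function $F(t) = \int_{s \in I,\, s < t} \|f(s)\|_X^p \, ds$ is continuous and strictly increasing from $I$ onto $[0,1]$. For each dyadic level $n \geq 0$ and each $0 \leq k < 2^n$, I define the subinterval $I^n_k := F^{-1}\bigl([k 2^{-n}, (k+1) 2^{-n})\bigr)$; these partition $I$ into $2^n$ subintervals, each carrying $L^p_t$-mass exactly $2^{-n/p}$, and they satisfy the binary tree property $I^n_k = I^{n+1}_{2k} \sqcup I^{n+1}_{2k+1}$.

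The key combinatorial observation is the disjoint tiling, valid up to a set of measure zero,
\begin{equation*}
\{(s,t) \in I \times I : s < t\} \;=\; \bigsqcup_{m \geq 0}\, \bigsqcup_{k = 0}^{2^m - 1} I^{m+1}_{2k} \times I^{m+1}_{2k+1},
\end{equation*}
obtained by assigning to each such pair its unique largest common dyadic ancestor $I^m_j$, at which level $s$ lies in the left child and $t$ in the right child. This rewrites the retarded operator as
\begin{equation*}
\int_{s < t} K(t,s) f(s) \, ds \;=\; \sum_{m \geq 0} \sum_{k = 0}^{2^m - 1} \chi_{I^{m+1}_{2k+1}}(t)\, T_{m,k}(t), \qquad T_{m,k}(t) := \int_{I^{m+1}_{2k}} K(t,s) f(s) \, ds,
\end{equation*}
and the non-retarded hypothesis applied to $f \cdot \chi_{I^{m+1}_{2k}}$ yields $\|T_{m,k}\|_{L^q_t(I \to Y)} \leq C \cdot 2^{-(m+1)/p}$.

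Because the indicators $\chi_{I^{m+1}_{2k+1}}$ have pairwise disjoint supports in $k$, the $L^q_t$ norm at fixed level $m$ obeys
\begin{equation*}
\Bigl\| \textstyle\sum_k \chi_{I^{m+1}_{2k+1}} T_{m,k} \Bigr\|_{L^q_t(I \to Y)}^q \;=\; \sum_k \|T_{m,k}\|_{L^q_t(I^{m+1}_{2k+1} \to Y)}^q \;\leq\; 2^m \cdot C^q \cdot 2^{-(m+1)q/p},
\end{equation*}
so the level-$m$ contribution is of order $2^{m(1/q - 1/p)}$. Since $p < q$ forces $1/q - 1/p < 0$, summing over $m$ via the triangle inequality in $L^q_t$ gives a convergent geometric series, yielding the desired retarded estimate with a constant depending only on $C$, $p$, and $q$. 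The main delicate point I anticipate is verifying that the sibling-pair tiling genuinely covers $\{s < t\}$ with no repetition --- a short tree-geometry argument that uses the strict monotonicity of $F$ --- together with handling the endpoint $q = \infty$, where the displayed $\ell^q$ identity is replaced by a supremum but still delivers the level-$m$ bound $C \cdot 2^{-(m+1)/p}$, which remains summable in $m$ since $p < \infty$.
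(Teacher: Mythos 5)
The paper itself does not prove this lemma; it is quoted verbatim from the cited reference \cite{Christ-Kiselev}, so there is no in-house argument to compare against. Your proof is correct and is in essence the standard Christ--Kiselev argument (as presented, e.g., by Christ--Kiselev, Smith--Sogge, and in Tao's book): normalize the cumulative $L^p$-mass $F(t)$ to be a continuous strictly increasing bijection onto $[0,1]$, build the dyadic tree of preimages $I^n_k = F^{-1}([k2^{-n},(k+1)2^{-n}))$, tile the region $\{s<t\}$ (modulo measure zero) by the sibling rectangles $I^{m+1}_{2k}\times I^{m+1}_{2k+1}$ over their lowest common ancestor, apply the non-retarded hypothesis to $f\chi_{I^{m+1}_{2k}}$, use disjointness of the $t$-supports at a fixed level to sum in $k$ in $\ell^q$, and sum the geometric series in $m$ using $1/q<1/p$; the $q=\infty$ case is handled by a supremum instead of $\ell^q$, and $p<\infty$ still yields summability. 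The one small terminological slip is calling the relevant node the ``largest common dyadic ancestor'': in tree language it is the lowest (deepest) common ancestor, i.e. the largest level index $m$, not the largest interval. The measure-zero caveat covering dyadic rationals of $F$ and the density reduction to nowhere-vanishing $f$ are both stated and adequate. Nothing essential is missing.
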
 

On the other hand, in order to establish the endpoint inhomogeneous estimates, we end up with following bilinear estimates $$\int\!\!\!\int_{s < t} \langle e^{i(t - s) P^2} F(s), G(t) \rangle \, dsdt \leq C \|F\|_{L^2_t L_z^{r'}} \|G\|_{L^2_tL_z^{r'}}.$$ Plugging in the decomposition of the propagator, we have to establish following estimates \begin{eqnarray}\label{retarded jk}\int\!\!\!\int_{s < t} \langle U_j(t) U_k^\ast(s) F(s), G(t) \rangle \, dsdt &\leq& C \|F\|_{L^2_t L_z^{r'}} \|G\|_{L^2_tL_z^{r'}}\\ \label{retarded lowlow}\int\!\!\!\int_{s < t} \langle U_{\text{low}}(t) U_{\text{low}}^\ast(s) F(s), G(t) \rangle \, dsdt &\leq& C \|F\|_{L^2_t L_z^{r'}} \|G\|_{L^2_tL_z^{r'}}\\ \label{retarded jlow} \int\!\!\!\int_{s < t} \langle U_\text{low}(t) U_k^\ast(s) F(s), G(t) \rangle \, dsdt &\leq& C \|F\|_{L^2_t L_z^{r'}} \|G\|_{L^2_tL_z^{r'}}.\end{eqnarray}

We want to use the standard Keel-Tao endpoint argument \cite[Section 7]{Keel-Tao}. So we have to establish the dispersive estimates and energy estimates for these propagators. The energy estimates are proved in Proposition \ref{energy estimate}. The dispersive estimates for $U_{\text{low}}(t) U_{\text{low}}^\ast(s)$ are proved in \eqref{short time low energy dispersive}, while $U_\text{low}(t) U_k^\ast(s)$ actually satisfies \eqref{short time low energy dispersive} as well. Therefore \eqref{retarded lowlow} and \eqref{retarded jlow} are proved by the standard Keel-Tao retarded estimates. 

The tricky one is \eqref{retarded jk}. According to Proposition \ref{dispersive estimate2}, we only have the dispersive estimates for $U_j(t) U_k^\ast(s) $ when  $t < s$ in the case of $Q_j$ is not outgoing related to $Q_k$, though \eqref{retarded jk} is proved as above for other cases. Namely, what we can prove by the Keel-Tao argument when $Q_j$ is not outgoing related to $Q_k$ is that $$\int\!\!\!\int_{t < s} \langle U_j(t) U_k^\ast(s) F(s), G(t) \rangle \, dsdt \leq C \|F\|_{L^2_t L_z^{r'}} \|G\|_{L^2_tL_z^{r'}}.$$ Nevertheless, noting that the homogeneous Strichartz estimates, by duality, implies $$\int\!\!\!\int \langle U_j(t) U_k^\ast(s) F(s), G(t) \rangle \, dsdt \leq C \|F\|_{L^2_t L_z^{r'}} \|G\|_{L^2_tL_z^{r'}}.$$ So we still obtain \eqref{retarded jk}.

\section{Mapping properties of Schr\"{o}dinger propagators}

It remains to prove Lemma \ref{long time lemma} and Lemma \ref{short time lemma}.

\begin{proof}[Proof of \eqref{dispersive low} and \eqref{dispersive high}]

The short time behaviour \eqref{dispersive low} and \eqref{dispersive high} come from the interpolation among \begin{eqnarray}
\label{short time 1 to r}\|\cdot\|_{L^1 \rightarrow L^r} &\leq& C t^{-(n + 1)/2} \quad \mbox{for any $r > 2$,}\\
\label{short time r' to infty}\|\cdot\|_{L^{r'} \rightarrow L^\infty} &\leq& C t^{-(n + 1)/2} \quad \mbox{for any $r > 2$,}\\
\label{short time 2 to 2}\|\cdot\|_{L^{2} \rightarrow L^2} &\leq& C  . 
\end{eqnarray}

The last one \eqref{short time 2 to 2} is indeed Proposition \ref{energy estimate}, while we shall prove  \eqref{short time 1 to r} and \eqref{short time r' to infty}, via dispersive estimates, by a comparison argument with hyperbolic space. The identical argument is used to show the restriction theorem in \cite{Chen-Hassell2}. 

Observing the RHS of the short time dispersive estimates in Proposition \ref{dispersive estimate1},  let us consider the kernel $$K_t(z, z') =  t^{-(n + 1)/2} (1 + d(z, z'))^n e^{- n d(z, z') / 2}$$ instead of the propagators. We claim, for $r > 2$, $$\|K_t\|_{L^1 \rightarrow L^r} = \sup_{z'} \|K_t\|_{L^r_z} \leq t^{-(n + 1)/2}.$$ Thinking of $K_t$ as a function supported on $X_0^2$, we decompose it as $$K_t = K_t \cdot \chi_U + K_t \cdot (1 - \chi_U),$$ where $U$ is a small neighbourhood of the front face.

The second part is proved by the fact that $d(z, z')$ is comparable to $- \log(xx')$ away from the front face. Then we have \begin{eqnarray*}\sup_{z'} \|K_t\|_{L^r_z} &=& t^{-(n + 1)/2}\sup_{z'}\bigg(\int (1 + d(z, z'))^{nr/2} e^{-nrd(z, z')/2} dg_z\bigg)^{1/r} \\ &\leq& C t^{-(n + 1)/2}\sup_{x'} \int \big(- \log (xx')\big)^{nr/2} (xx')^{nr/2} \, \frac{dx}{x^{n}} \\ &\leq& C t^{-(n + 1)/2}.\end{eqnarray*} 

On the other hand, consider the spectral measure restricted to $U$ say $K_{t, U} (z, z') = K_t \cdot \chi_U$. Before looking into the specific estimate, we shall compare this region with hyperbolic space $\mathbb{H}^{n + 1}$. To do so, one may further decompose the set $U$ into subsets $U_i$, where on each $U_i$, we have $x \leq \epsilon, x' \leq \epsilon$ and $d(y, y_i), d(y', y_i) \leq \epsilon$ for some $y_i \in \partial X$ (where the distance is measured with respect to the metric $h(y, dy)$ on $\partial X$). Choose local coordinates $(x, y)$ on $X$, centred at $(0, y_i) \in \partial X$, covering the set $ V_i = \{ x \leq \epsilon, d(y, y_i) \leq \epsilon \}$, and use these local coordinates to define a map $\phi_i$ from $V_i$ to a neighbourhood $V_i'$ of $(0,0)$ in hyperbolic space $\HH^{n+1}$ using the upper half-space model (such that the map is the identity in the given coordinates). The map $\phi_i$ induces a diffeomorphism $\Phi_i$ from $U_i \subset X^2_0$ to a subset of $(\mathbb{H}^{n+1})^2_0$, the double space for $\mathbb{H}^{n+1}$, covering the set $x \leq \epsilon, x' \leq \epsilon, |y|, |y'| \leq \epsilon$ in this space. Clearly, this map identifies $\rho_L$ and $\rho_R$ on $U_i$ with corresponding boundary defining functions for the left face and right face on $(\mathbb{H}^{n+1})^2_0$. We now reduce the kernel to
\begin{equation}
\phi_i \circ K_{t, U_i} \circ \phi_i^{-1}
\label{transfer}\end{equation} as an integral operator on $(\mathbb{H}^{n+1})^2_0$. After linking the front face to the hyperbolic case, we now can reduce to the estimate to the hyperbolic case as follows. $$\sup_{z'} \|K\|_{L^r_z(V_i)} = C \sup_{z'} \|\tilde{K}\|_{L^r_z(V_i')} \leq C |t|^{-(n + 1)/2},$$ where $K$ is mapped to $\tilde{K}$ on hyperbolic space and the $L^r$ norm of $\tilde{K}$ on hyperbolic space.

\end{proof}

\begin{proof}[Proof of \eqref{dispersive low long} and \eqref{dispersive high long}]

The long time behaviour results from the interpolation among \begin{eqnarray}
\label{long time 1 to r}\|\cdot\|_{L^1 \rightarrow L^r} \leq C t^{-3/2} && \mbox{for any $r > 2$,}\\
\label{long time r' to infty}\|\cdot\|_{L^{r'} \rightarrow L^\infty} \leq C t^{-3/2} && \mbox{for any $r > 2$,}\\
\label{long time r' to r}\|\cdot\|_{L^{r'} \rightarrow L^r} \leq C t^{-3/2} && \mbox{for any $r > 2$,}
\end{eqnarray} provided $t > 1$.

The proofs of \eqref{long time 1 to r} and \eqref{long time r' to infty} are exactly the same with \eqref{short time 1 to r} and \eqref{short time r' to infty}.

The novelty in the proof of \eqref{long time r' to r} is a non-trivial non-Euclidean ingredient called the Kunze-Stein phenomenon, which is named after Kunze and Stein \cite{Kunze-Stein}. Specifically, the Kunze-Stein phenomenon on hyperbolic space $\mathbb{H}^{n + 1}$ at $(2, 2)$  is expressed as \begin{equation*}\| f \ast F \|_{L^2(\mathbb{H}^{n + 1})} \leq C \|f\|_{L^2(\mathbb{H}^{n + 1})} \cdot \int_0^\infty |F(\rho)| (1 + \rho) e^{n\rho/2} d\rho,\end{equation*} for any $f, F \in C_0(\mathbb{H}^{n + 1})$, provided $F(\rho)$ is a radial function. See Cowling's work \cite{cowling-annmath} for a general result on semi-simple Lie groups. There is a generalized inequality \begin{equation}\label{Kunze-Stein2}\| f \ast F \|_{L^r(\mathbb{H}^{n + 1})} \leq C \|f\|_{L^{r'}(\mathbb{H}^{n + 1})} \cdot \bigg(\int_0^\infty |F(\rho)|^{r/2} (1 + \rho) e^{n\rho/2} d\rho\bigg)^{2/r}, \quad \mbox{for $r \geq 2$}\end{equation} obtained by Anker and Pierfelice \cite{Anker-Pierfelice2}. 

According to the long time dispersive estimates \eqref{long time dispersive microlocalized} \eqref{short time low energy dispersive},\footnote{Note $t^{-(n + 1)/2} < t^{-3/2}$ for the intermediate times $1 < |t - s| < 1 + d(z, z')$.}  we consider a  kernel $K_t(z, z') = t^{-3/2} (1 + d(z, z'))
e^{-n d(z, z')/2}$ on $X^2_0$ and decompose it as as $$K_t = K_t \cdot \chi_U + K_t \cdot (1 - \chi_U),$$ where $U$ is a small neighbourhood of the front face. 

The part away the front face is proved like the short time case $$ \|K_t\|_{L^{r'} \rightarrow L^r} = \|K_t\|_{L^r(X^2_0 \setminus U)} \leq t^{-3/2}\bigg(\int (1 + d(z, z'))^{nr/2} e^{-nrd(z, z')} dg_zdg_{z'}\bigg)^{1/r} \leq C t^{-3/2}.$$

For the part near the front face, we link the front face to hyperbolic space as in \eqref{transfer} and then have $$\bigg\|\int K \ast f\bigg\|_{L^{r}(V_i)} =  C \bigg\|\int \tilde{K} \ast \tilde{f}\bigg\|_{L^{r}(V_i')}  \leq C t^{-3/2} \| f\|_{L^{r'}(V_i)},$$ by invoking \eqref{Kunze-Stein2}, where $K$ and $f$ are mapped to $\tilde{K}$ and $\tilde{f}$ on hyperbolic space respectively.

\end{proof}

\section*{Acknowledgement}

This work was initiated in the research proposal of the application for the trimester program "Harmonic Analysis and Partial Differential Equations" hosted by Hausdorff Research Institute for Mathematics. Part of this paper was finished during my stay in Bonn. I would like to thank Hausdorff Research Institute for Mathematics for their hospitality. I am indebted to my PhD supervisor Andrew Hassell for his inexhaustible patience with me, without which this series of papers on asymptotically hyperbolic manifolds wouldn't have been finished. I am also grateful to Margaux Schmeltz for helping me write a French abstract.

\begin{flushleft}

\textsc{Shanghai Center for Mathematical Sciences\\Fudan University\\Shanghai 200433, China}

and

\textsc{Mathematical Sciences
Institute\\Australian National University\\Canberra 0200, Australia}

and

\textsc{Riemann Center for Geometry and Physics\\Leibniz Universit\"{a}t Hannover\\Hannover 30167, Germany}

\emph{E-mail address}: \textsf{herr.chenxi@outlook.com}

\end{flushleft}


\begin{thebibliography}{99}

\bibitem{Anker-Pierfelice}J.-P. Anker and V. Pierfelice, \emph{Nonlinear Schr\"{o}dinger equation on real hyperbolic spaces}, Ann. I. H. Poincar\'{e} AN \textbf{26}(2009), 1853-1869.

\bibitem{Anker-Pierfelice2}J.-P. Anker and V. Pierfelice, \emph{Wave and Klein-Gordon equations on hyperbolic spaces}, Anal. PDE \textbf{7}(2014), 953-995.

\bibitem{Banica-Carles-Staffilani}V. Banica, R. Carles and G. Staffilani, \emph{Scattering theory for radial nonlinear Schr\"{o}dinger equations on hyperbolic spaces}, Geom. Funct. Anal. \textbf{18}(2008), No. \textbf{2}, 367-399.

\bibitem{Bouclet-apde-2011}J.-M. Bouclet, \emph{Strichartz estimates on asymptotically hyperbolic manifolds}, Anal. PDE \textbf{4}(2011), No. \textbf{1}, 1-84.

\bibitem{Bourgain-GAFA-1993-1}J. Bourgain, \emph{Fourier transform restriction phenomena for certain lattice subsets and applications to nonlinear evolution equations: I. Schr\"{o}dinger equations}, Geom. Funct. Anal. \textbf{3}(1993), 107-156.

\bibitem{Bourgain-GAFA-1993-2}J. Bourgain, \emph{Exponential sums and nonlinear Schr\"{o}dinger equations}, Geom. Funct. Anal. \textbf{3}(1993), 157-178.


\bibitem{Burq-Gerard-Tzvetkov-2004} N. Burq, P. G\'{e}rard and N. Tzvetkov, \emph{Strichartz inequalities and the nonlinear Schr\"{o}dinger equation on compact manifolds}, Amer. J. Math. \textbf{126}(2004), 569-605.

\bibitem{Burq-Gerard-Tzvetkov-2005} N. Burq, P. G\'{e}rard and N. Tzvetkov, \emph{Bilinear eigenfunction estimates and the nonlinear Schr\"{o}dinger equation on surfaces}, Invent. Math. \textbf{159}(2005), 187-223.


\bibitem{Burq-Guillarmou-Hassell}N. Burq, C. Guillarmou and A. Hassell, \emph{Strichartz estimates without loss on manifolds with hyperbolic trapped geodesics}, Geom. Funct. Anal. \textbf{20}(2010), 627-656.




\bibitem{Cazenave book}T. Cazenave, \emph{Semilinear Schr\"{o}dinger Equations}, Courant Lecture Notes in Mathematics, \textbf{10}, New York Univ., Courant Inst. in Math. Sci., New York; Amer. Math. Soc., Providence (2003).




\bibitem{Chen-Hassell1} X. Chen and A. Hassell, \emph{Resolvent and spectral measure on non-trapping asymptotically hyperbolic manifolds I: Resolvent construction at high energy}, arXiv:1410.6881.


\bibitem{Chen-Hassell2} X. Chen and A. Hassell, \emph{Resolvent and spectral measure on non-trapping asymptotically hyperbolic manifolds II: Spectral Measure, Restriction Theorem, Spectral Multiplier},  arXiv:1412.4427.



\bibitem{Christ-Kiselev}M. Christ and A. Kiselev, \emph{Maximal functions associated to filtrations,} J. Funct. Anal. \textbf{179}(2001), 409-425.


\bibitem{cowling-annmath}M. Cowling, \emph{The Kunze-Stein phenomenon}, Ann. Math. \textbf{107}(1978), 209-234.

\bibitem{Ginibre-Velo-JMPA-1985}J. Ginibre and G. Velo,  \emph{Scattering theory in the energy space for a class of nonlinear Schr\"{o}dinger equations}, J. Math. Pures Appl. \textbf{64}(1985), 363-401.

\bibitem{Ginibre-Velo-JFA-1995}J. Ginibre and G. Velo, \emph{Generalized Strichartz inequalities for the wave equation}, J. Funct. Anal. \textbf{133}(1995), No. \textbf{1}, 50-68.


\bibitem{fourier analysis}L. Grafakos, \emph{Classical and Modern Fourier Analysis}, Prentice Hall, New Jersey, 2004.



\bibitem{Guillarmou-Hassell}C. Guillarmou and A. Hassell, \emph{Uniform Sobolev estimates for non-trapping metrics},  J. Inst. Math. Jussieu \textbf{13}(2014), No.\textbf{3}, 599-632.

\bibitem{Guillarmou-Hassell-Sikora}C. Guillarmou, A. Hassell and A. Sikora, \emph{Restriction and spectral multiplier theorems on asymptotically conic manifolds}, Anal. PDE \textbf{6}(2013), No.\textbf{4}, 893-950.


\bibitem{Guillarmou-Qing}C. Guillarmou and J. Qing, \emph{Spectral characterization of Poincar\'{e}-Einstein manifolds with infinity of positive Yamabe type}, Inter. Math. Res. Not. \textbf{2010}, No. \textbf{9}, 1720-1740.


\bibitem{Hassell-Zhang}A. Hassell and J. Zhang, \emph{Global-in-time Strichartz estimates on non-trapping asymptotically conic manifolds}, arXiv:1310.0909.



\bibitem{Ionescu-Annmath-2000}A. Ionescu, \emph{An endpoint estimate for the Kunze-Stein phenomenon and related maximal operators,} Ann. Math.(2) \textbf{152}(2000), 259-275.

\bibitem{Ionescu-Staffilani-Mathann-2009}A. Ionescu and G. Staffilani, \emph{Semilinear Schr\"{o}dinger flows on hyperbolic spaces: scattering in $H^1$}, Math. Ann. \textbf{345}(2009), 133-158.


\bibitem{Kato}T. Kato, \emph{On nonlinear Schr\"{o}dinger equations}, Ann. Inst. H. Poincar\'{e} Phys. Th\'{e}or. \textbf{46}(1987), 113-129. 


\bibitem{Keel-Tao}M. Keel and T. Tao, \emph{Endpoint Strichartz estimates}, Amer. J. Math. \textbf{120}(1998), No.\textbf{5}, 955-980.


\bibitem{Kunze-Stein}R. A. Kunze and E. M. Stein, \emph{Uniformly bounded representations and harmonic analysis of the $2 \times 2$ unimodular group}, Amer. J. Math. \textbf{82}(1960), 1-62.




\bibitem{Mazzeo-JDG-1988}R. Mazzeo, \emph{The Hodge cohomology of a conformally compact metric},  J. Diff. Geom. \textbf{28}(1988), 309-339.


\bibitem{Mazzeo-1991}R. Mazzeo, \emph{Unique continuation at infinity and embedded eigenvalues for asymptotically hyperbolic manifolds},  Amer. J. Math. \textbf{113} (1991), no. 1, 25-45. 

\bibitem{Mazzeo-Melrose}R. Mazzeo and R. B. Melrose, \emph{Meromorphic extention of the resolvent on complete spaces with asymptotically constant negative curvature}, J. Func. Anal. \textbf{75}(1987), 260-310.

\bibitem{Melrose-Sa Barreto-Vasy}R. B. Melrose, A. S\'{a} Barreto and A. Vasy, \emph{Analytic continuation and semiclassical resolvent estimates on asymptotically hyperbolic spaces,} Comm. Part. Diff. Equa. \textbf{39}(2014), 452-511.



\bibitem{Strichartz}R. S. Strichartz, \emph{Restrictions of Fourier transforms to quadratic surfaces and decay of solutions of wave equations,} Duke Math. J. \textbf{44}(1977), No. \textbf{3}, 705-714.

\bibitem{Tao book}T. Tao, \emph{Nonlinear Dispersive Equations, Local and Global Analysis.} CBMS Regional Conference Series in Math. \textbf{106},  Amer. Math. Soc., Providence, RI, 2006.


\bibitem{Wang}Y. Wang, \emph{Resolvent and radiation fields on asymptotically hyperbolic manifolds}, arXiv:1410.6936.







\end{thebibliography}
\end{document}